\title{Ulrich Sheaves and Higher-Rank Brill-Noether Theory}
\author{
	Rajesh Kulkarni 
		\footnote{Michigan State University, East Lansing, Michigan. {\sf kulkarni@math.msu.edu}},
	Yusuf Mustopa 
		\footnote{Tufts University, Medford, Massachusetts. {\sf Yusuf.Mustopa@tufts.edu}}, and 
	Ian Shipman 
		\footnote{University of Michigan, Ann Arbor, Michigan. {\sf ishipman@umich.edu}}
}
\theoremstyle{plain}
\newtheorem{thm}{Theorem}[section]
\newtheorem*{thma*}{Theorem A}
\newtheorem*{propb*}{Proposition B}
\newtheorem*{corc*}{Corollary C}
\newtheorem*{propd*}{Proposition D}
\newtheorem*{thm*}{Theorem}
\newtheorem{lma}[thm]{Lemma}
\newtheorem{prop}[thm]{Proposition}
\newtheorem{cor}[thm]{Corollary}
\theoremstyle{definition}
\newtheorem{dfn}[thm]{Definition}
\newtheorem{rmk}[thm]{Remark}
\newtheorem*{rmk*}{Remark}
\def\tk{\tensor_\bk}
\def\bt{\bullet}
\def\rmH{\mathrm{H}}
\def\Proj{\mathrm{Proj}}
\begin{document}

\maketitle

\begin{abstract}
We show that the existence of an Ulrich sheaf on a projective variety $X \subseteq \mathbb{P}^{N}$ is equivalent to the solution of a (possibly) higher-rank Brill-Noether problem for a curve on $X$ that is rarely general in moduli.  In addition, we exhibit a large family of curves for which this Brill-Noether problem admits a solution, and we show that existence of an Ulrich sheaf for a finite morphism of smooth projective varieties of any dimension implies sharp numerical constraints involving the degree of the map and the ramification divisor.
\end{abstract}

\section{Introduction} \label{s:intro}

Let $X \subseteq \mathbb{P}^{n}$ be a projective variety of dimension $m$ and degree $d.$  A coherent sheaf $\cE$ of rank $r$ on $X$ is said to be an \textit{Ulrich sheaf} (with respect to $\cO_{X}(1)$) if for a general linear projection $\pi:X \to \mathbb{P}^{m},$ the direct image sheaf $\pi_{\ast}\cE$ is isomorphic to $\cO_{\mathbb{P}^{m}}^{dr}.$  A straightforward application of Horrocks' Theorem shows that Ulrich sheaves are normalized aCM (arithmetically Cohen-Macaulay) sheaves on $X$ with respect to $\mathcal{O}_{X}(1),$ and that they admit the maximum possible number of global sections for a normalized aCM sheaf of their rank.  Moreover, they are Gieseker-semistable and globally generated.  

The conjecture that every subvariety of projective space admits an Ulrich sheaf has received much attention in recent years, owing to its natural role in the general study of aCM sheaves \cite{CH1} as well as important connections with Chow forms \cite{ESW}, Boij-S\"{o}derberg theory \cite{EE,ES}, an approach to Lech's conjecture \cite{H}, and generalized Clifford algebras \cite{CKM1,CKM3}.  There are affirmative answers for curves \cite{ESW}, hypersurfaces \cite{BHS}, complete intersections and linear determinantal varieties \cite{BHU}, Grassmannians \cite{CM}, Segre varieties \cite{ESW,CMP}, and generic K3 surfaces of any genus \cite{AFO}.  Our main result (Theorem A) provides, among other things, a geometric gauge (Corollary C) for the difficulty of this conjecture:  an affirmative answer effectively solves a series of higher-rank Brill-Noether problems on a wide variety of non-generic curves all at once. 

Before going further, we emphasize a fundamental difference between the point of view taken here and previous constructions involving the geometry of curves.  In the papers \cite{AFO,CKM1,CKM2} Ulrich sheaves of high rank on surfaces are produced from special line bundles on a curve via elementary modification, and in \cite{CH2} Ulrich sheaves on smooth cubic threefolds are produced from curves using a Serre-type construction.  The starting point of our approach is more na\"{i}ve:  given a projective variety $X \subset \mathbb{P}^{N}$ of dimension 2 or greater, does there exist a curve $C \subset X$ and a vector bundle on $C$ which is the restriction of an Ulrich sheaf on $X$?   

Turning to details, observe that the definition of Ulrich sheaf depends only on a direct image under a morphism.  Given a degree-$d$ finite flat morphism $f:X \rightarrow Y$ of projective varieties, we define a coherent sheaf $\cE$ of rank $r$ on $X$ to be $f-$Ulrich if $f_{\ast}\cE \cong \cO_{Y}^{dr}.$  Note that pullback by an embedding $Y' \hookrightarrow Y$ takes an $f-$Ulrich sheaf on $X$ to an $f'-$Ulrich sheaf on $X':=X \times_{Y} Y',$ where $f':X' \to Y'$ is the induced morphism.  This abstract framework is a proper setting in which to consider ``ordinary" Ulrich sheaves, as indicated by the following result. 


\begin{thma*}
\label{main}
Let $X \subseteq \mathbb{P}^{n}$ be a projective variety of dimension $m \geq 2$ and degree $d$ whose homogeneous ideal $I_{X|\mathbb{P}^{n}}$ is generated in degree at most $N$ (where $N \geq 2$), and let $f:X \rightarrow \mathbb{P}^{m}$ be a general linear projection.  In addition, let $C \subseteq \mathbb{P}^{m}$ be an aCM curve whose homogeneous ideal $I_{C|\mathbb{P}^{m}}$ is generated in degrees at least $N+1,$ satisfying the property that $X_{C}:=X \times_{\mathbb{P}^{m}} C$ is a smooth irreducible curve.  Then any $f|_{C}-$Ulrich sheaf on $X_{C}$ extends to an Ulrich sheaf for $X.$ 
\end{thma*}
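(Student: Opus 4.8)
The plan is to reformulate the Ulrich condition as purely linear-algebraic data attached to the projection, and then exploit the gap between the degrees of the generators of $I_{X|\mathbb{P}^n}$ and those of $I_{C|\mathbb{P}^m}$. Write $S = k[x_0,\dots,x_m]$ for the homogeneous coordinate ring of $\mathbb{P}^m$ and choose coordinates so that the linear projection $f$ forgets the variables $y_1,\dots,y_{n-m}$; thus the homogeneous coordinate ring of $X$ is $R = S[y_1,\dots,y_{n-m}]/I_{X|\mathbb{P}^n}$ and $f$ corresponds to the finite inclusion $S\hookrightarrow R$. Since $f$ is linear, $\mathcal{O}_X(1)\cong f^*\mathcal{O}_{\mathbb{P}^m}(1)$, so the projection formula gives, for any $f$-Ulrich sheaf $\mathcal{E}$, a graded isomorphism $\Gamma_*(\mathcal{E})\cong\Gamma_*(f_*\mathcal{E})=\Gamma_*(\mathcal{O}_{\mathbb{P}^m}^{dr})=S^{dr}$. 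Hence an $f$-Ulrich sheaf is the same as a graded $R$-module $M$ that is free over $S$ of rank $dr$ and generated in degree $0$; equivalently, a tuple of pairwise commuting matrices $A_1,\dots,A_{n-m}\in\mathrm{Mat}_{dr}(S_1)$ encoding the action of $y_1,\dots,y_{n-m}$, subject to $g(x;A_1,\dots,A_{n-m})=0$ (with $x_i$ acting as $x_i\cdot\mathrm{Id}$) for every generator $g$ of $I_{X|\mathbb{P}^n}$. Conversely, any such data sheafifies to an $f$-Ulrich sheaf.

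Next I would run the identical reduction over $C$. Because $C$ is aCM we have $\Gamma_*(\mathcal{O}_C)=S_C:=S/I_{C|\mathbb{P}^m}$, and the smoothness and irreducibility of $X_C$ guarantee that $R_C:=R\otimes_S S_C$ is the homogeneous coordinate ring of $X_C$ and that an $f|_C$-Ulrich sheaf $\mathcal{F}$ is a vector bundle with $\Gamma_*(\mathcal{F})\cong S_C^{dr}$. The same bookkeeping as above then presents $\mathcal{F}$ by matrices $\bar{A}_1,\dots,\bar{A}_{n-m}\in\mathrm{Mat}_{dr}((S_C)_1)$, pairwise commuting over $S_C$ and satisfying $g(x;\bar{A})=0$ in $\mathrm{Mat}_{dr}(S_C)$ for each generator $g$ of $I_{X|\mathbb{P}^n}$. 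The point to record is that $(S_C)_1=S_1$, since $I_{C|\mathbb{P}^m}$ has no elements in degree $1$; thus the entries of the $\bar{A}_j$ are honest linear forms on $\mathbb{P}^m$.

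The crux is the degree gap. Since $I_{C|\mathbb{P}^m}$ is generated in degrees at least $N+1$, we have $(I_{C|\mathbb{P}^m})_e=0$ for every $e\le N$, so $(S_C)_e=S_e$ in this range. Now every relation imposed on the matrices lives in low degree: the commutativity relations $\bar{A}_j\bar{A}_k-\bar{A}_k\bar{A}_j$ are homogeneous of degree $2\le N$ (here $N\ge 2$ is used), and $g(x;\bar{A})$ is homogeneous of degree $\deg g\le N$. Consequently each of these matrices of forms vanishes in $\mathrm{Mat}_{dr}(S_C)$ if and only if it vanishes in $\mathrm{Mat}_{dr}(S)$. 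Viewing the $\bar{A}_j$ as matrices $A_j:=\bar{A}_j$ over $S$ therefore produces pairwise commuting operators satisfying all the generators of $I_{X|\mathbb{P}^n}$, i.e.\ a graded $R$-module $M=S^{dr}$ that is free over $S$. By the dictionary of the first paragraph, $\mathcal{E}:=\widetilde{M}$ is an Ulrich sheaf for $X$, and since $M\otimes_S S_C\cong S_C^{dr}$ recovers $\mathcal{F}$ together with its module structure, $\mathcal{E}|_{X_C}\cong\mathcal{F}$; thus the given $f|_C$-Ulrich sheaf extends.

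I expect the main obstacle to be bookkeeping rather than geometry: establishing the dictionary between Ulrich sheaves and commuting-matrix data with the correct normalization (in particular that $\Gamma_*(\mathcal{F})$ is $S_C$-free and resheafifies to $\mathcal{F}$, which is where $C$ aCM and $X_C$ smooth are used), and pinning down the exact degrees in which all relations sit so that the inequality $N<N+1$ can be applied uniformly. Once the relations are seen to live entirely in degrees $\le N$, where $S_C$ and $S$ agree, the extension is forced; the three numerical hypotheses ($N\ge 2$, $I_{X|\mathbb{P}^n}$ generated in degrees $\le N$, and $I_{C|\mathbb{P}^m}$ generated in degrees $\ge N+1$) are precisely what make this degree comparison work, and the final check that $\widetilde{M}$ is genuinely Ulrich is then formal.
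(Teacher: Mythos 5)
Your proof is correct, and at its core it runs on the same engine as the paper's: both reduce the problem to extending a graded module structure from the homogeneous coordinate ring of $C$ to that of $\mathbb{P}^m$, and both exploit the fact that every relation to be verified --- commutativity in degree $2$ and the generators of $I_{X|\mathbb{P}^n}$ in degrees $\le N$ --- lives in degrees where $S$ and $S_C$ agree. Where you genuinely diverge is in the extension of the structure map itself. The paper's Proposition \ref{prop:general-lift} is stated for arbitrary dissoci\'{e} sheaves $\oplus\cO_Y(-a_j)$ over an arbitrary normal base, so it must impose as a separate hypothesis (condition (ii)) that the map $(f_Z)_*\cO_{X_Z}\otimes\cE|_Z\to\cE|_Z$ extends to all of $Y$; verifying this is the content of Lemma \ref{lma:acm-check}, which kills the obstruction in $\rmH^{1}(f_{\ast}\cO_{X}\otimes{\cE}nd(\cE)\otimes\cI_{C|\mathbb{P}^{m}})$ using the minimal free resolution coming from the aCM hypothesis plus Kodaira vanishing. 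In the Ulrich case your matrix formulation makes that step tautological: since $\Gamma_*(\cF)\cong S_C^{dr}$ is generated in degree $0$, the entire module structure is encoded by matrices with entries in $(S_C)_1=S_1$, which lift canonically. You thus trade the paper's generality (arbitrary twists $a_j$, arbitrary base) for an argument that uses the aCM hypothesis only through $\Gamma_*(\cO_C)=S_C$ and the vanishing $(I_{C|\mathbb{P}^m})_e=0$ for $e\le N$, rather than through the shape of the free resolution. Two small points of hygiene: you should record that $S\hookrightarrow R$ is injective and finite (the center of a general projection misses $X$), and your parenthetical that smoothness of $X_C$ is what gives $\Gamma_*(\cF)\cong S_C^{dr}$ is slightly off --- that isomorphism needs only $(f|_C)_*\cF\cong\cO_C^{dr}$ together with projective normality of $C$, and smoothness of $X_C$ plays no role in the lifting argument itself.
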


The main ingredient of the proof is Proposition \ref{prop:general-lift}, a general result on the lifting of sheaves via finite flat coverings.  We say that a coherent sheaf $\cF$ on $Y$ \textit{lifts to} $X$ via $f:X \to Y$ if $\cF \cong f_{\ast}\cE$ for some coherent sheaf $\cE$ on $X$; with this terminology, the existence problem for $f-$Ulrich sheaves is the question of whether some trivial sheaf on $Y$ lifts to $X$ via $f.$  More specifically, Proposition \ref{prop:general-lift} gives a criterion for lifting \textit{dissoci\'{e}} sheaves on $Y,$ i.e. direct sums of twists of a fixed ample line bundle on $Y.$  In the special case $Y=\mathbb{P}^{n},$ the lifts to $X$ of dissoci\'{e} sheaves via $f$ are precisely the sheaves on $X$ that are aCM with respect to $\cO_{X}(1).$  Consequently, we expect that our work can be applied to the construction of non-Ulrich aCM sheaves; see Lemma \ref{lma:acm-check}.               

Theorem A justifies a thorough analysis of the existence problem for $f-$Ulrich sheaves when $f$ is a finite morphism of smooth projective curves.  When $f:X \to \mathbb{P}^{1}$ is a degree-$d$ branched covering of $\mathbb{P}^{1}$ by a smooth projective curve of genus $g,$ a vector bundle $\cE$ of rank $r$ on $X$ is Ulrich precisely when $c_{1}(\cE)=r(g-1+d)$ and $h^{0}(\cE(-1))=h^{0}(\cE(-1))=0.$  The following proposition gives an equally clean characterization for the general case.       

\begin{propb*}
\label{ulrich-curves-criterion}
Let $f:X \to Y$ be a degree-$d$ finite morphism of smooth projective curves, and let $b$ be the degree of the branch divisor of $f.$  Then for a vector bundle $\cE$ of rank $r$ on $X,$ the following are equivalent.
\begin{itemize}
\item[(i)]{$\cE$ is $f-$Ulrich.}
\item[(ii)]{$\cE$ is globally generated, $h^{0}(\cE)=dr,$ and $c_{1}(\cE)=br/2.$}   
\end{itemize}  
\end{propb*}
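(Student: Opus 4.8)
The plan is to recast everything in terms of the pushforward $\mathcal{F}:=f_{\ast}\mathcal{E}$, a vector bundle of rank $dr$ on $Y$, using that $\mathcal{E}$ is $f$-Ulrich if and only if $\mathcal{F}\cong\mathcal{O}_Y^{dr}$. First I would record the numerical dictionary. Since $f$ is finite and flat, $\chi(\mathcal{E})=\chi(\mathcal{F})$ and $H^0(X,\mathcal{E})=H^0(Y,\mathcal{F})$; combining Riemann--Roch on $X$ and $Y$ with Riemann--Hurwitz in the form $\omega_X\cong f^{\ast}\omega_Y\otimes\mathcal{O}_X(R)$ (where $\deg R=b$) gives $\deg\mathcal{F}=c_1(\mathcal{E})-rb/2$. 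Thus the condition $c_1(\mathcal{E})=rb/2$ is exactly $\deg\mathcal{F}=0$, and $h^0(\mathcal{E})=dr$ is exactly $h^0(\mathcal{F})=\operatorname{rank}\mathcal{F}$. This reduces the proposition to understanding when such an $\mathcal{F}$ is trivial, together with the translation of global generation across $f$.

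For (i)$\Rightarrow$(ii), the degree and $h^0$ assertions are immediate from $\mathcal{F}\cong\mathcal{O}_Y^{dr}$. For global generation I would use that $f$ is affine, so the counit $\varepsilon\colon f^{\ast}f_{\ast}\mathcal{E}\to\mathcal{E}$ is surjective; since $f^{\ast}\mathcal{F}=\mathcal{O}_X^{dr}$ is globally generated and, by the adjunction triangle identity, the composite of $\varepsilon$ with the pullback of the evaluation map of $\mathcal{F}$ is precisely the evaluation map of $\mathcal{E}$, it follows that $\mathcal{E}$ is globally generated.

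The substance is (ii)$\Rightarrow$(i). The guiding principle is that a globally generated bundle of degree $0$ on a smooth projective curve is trivial, so it suffices to show that the evaluation map $\phi\colon\mathcal{O}_Y^{dr}\to\mathcal{F}$ is an isomorphism. Since $H^0(\phi)$ is already an isomorphism (the $dr$ sections form a basis of $H^0(\mathcal{F})=H^0(\mathcal{E})$), it is enough to prove that $\phi$ is generically injective: then $\det\phi$ is a nonzero section of the degree-$0$ line bundle $\det\mathcal{F}$, hence nowhere vanishing, whence $\phi$ is an isomorphism. By the projection formula, generic injectivity of $\phi$ is equivalent to the vanishing $H^0\big(\mathcal{E}\otimes f^{\ast}\mathcal{O}_Y(-y)\big)=H^0\big(\mathcal{F}\otimes\mathcal{O}_Y(-y)\big)=0$ for general $y\in Y$, i.e.\ to the statement that $\mathcal{E}$ separates the points of a general fibre of $f$.

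The hard part is precisely this fibrewise separation. Global generation of $\mathcal{E}$ yields surjectivity of $H^0(\mathcal{E})\to\mathcal{E}\otimes k(x)$ at each individual point $x$, but not the joint surjectivity $H^0(\mathcal{E})\to\bigoplus_{x\in f^{-1}(y)}\mathcal{E}\otimes k(x)$ over a whole fibre, and it is here that the remaining hypotheses must enter in an essential, global way. To finish I would argue by contradiction: if $\phi$ drops rank, set $\mathcal{I}=\operatorname{im}\phi\subseteq\mathcal{F}$, a globally generated subsheaf of some rank $s<dr$ carrying all $dr$ sections, so $h^0(\mathcal{I})=dr$; the elementary bound $h^0(\mathcal{I})\le\operatorname{rank}\mathcal{I}+\deg\mathcal{I}$ for globally generated bundles then forces $\deg\mathcal{I}\ge dr-s>0$. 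The crux is to contradict this using $\deg\mathcal{F}=0$ and $h^0(\mathcal{F})=\operatorname{rank}\mathcal{F}$ — concretely, to bound $\deg\mathcal{I}$ from above by controlling the sub-bundles of $\mathcal{F}=f_{\ast}\mathcal{E}$, via the adjunction $\operatorname{Hom}(\mathcal{S},f_{\ast}\mathcal{E})=\operatorname{Hom}(f^{\ast}\mathcal{S},\mathcal{E})$ and the positivity coming from global generation of $\mathcal{E}$. I expect establishing this upper bound — equivalently, that $f_{\ast}\mathcal{E}$ admits no sub-bundle of positive degree — to be the main obstacle, and the step at which the precise interplay of all three conditions in (ii) is indispensable.
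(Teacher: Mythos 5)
Your numerical dictionary, the direction (i)$\Rightarrow$(ii), and the reduction of (ii)$\Rightarrow$(i) to generic injectivity of the evaluation map $\phi\colon H^0(f_*\mathcal{E})\otimes\mathcal{O}_Y\to f_*\mathcal{E}$ are all correct and broadly parallel to the paper. The problem is that you stop exactly at the crux and leave it unresolved. The contradiction you set up (a rank-$s$ image $\mathcal{I}$ with $h^0(\mathcal{I})=dr$ forces $\deg\mathcal{I}\geq dr-s>0$) is fine, but the statement you then need --- that the degree-zero bundle $f_*\mathcal{E}$ admits no subsheaf of positive degree --- is essentially the semistability of $f_*\mathcal{E}$, which together with $h^0=\operatorname{rank}$ is equivalent to the triviality you are trying to prove; so the proposed reduction is close to circular, and the proof of (ii)$\Rightarrow$(i) is missing its main step. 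For comparison, the paper argues from the other side of $f$: it pushes forward the evaluation sequence $0\to M\to\mathcal{O}_X^{dr}\to\mathcal{E}\to 0$ on $X$, locates $\mathcal{O}_Y^{dr}$ inside $(f_*\mathcal{O}_X)^{dr}$ via the trace splitting, and deduces injectivity of $\mathcal{O}_Y^{dr}\to f_*\mathcal{E}$ from $h^0(M)=0$ together with torsion-freeness of $f_*\mathcal{E}$; the endgame (a torsion cokernel supported on a divisor in the degree-zero linear system $|\det f_*\mathcal{E}|$ must be empty) is the same as your $\det\phi$ argument.

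You should also know that your instinct about where the difficulty lies is more than justified: generic injectivity of $\phi$ is \emph{not} a formal consequence of the three conditions in (ii). The failure mode is a bundle whose global sections are linearly dependent over $K(Y)$ rather than over $k$, i.e.\ a pullback. Concretely, let $f\colon X\to\mathbb{P}^1$ be a degree-$2$ cover of $\mathbb{P}^1$ by an elliptic curve (so $d=2$, $b=4$) and $\mathcal{E}=f^*\mathcal{O}_{\mathbb{P}^1}(1)$: then $\mathcal{E}$ is globally generated with $h^0(\mathcal{E})=2=dr$ and $c_1(\mathcal{E})=2=br/2$, yet $f_*\mathcal{E}\cong\mathcal{O}_{\mathbb{P}^1}(1)\oplus\mathcal{O}_{\mathbb{P}^1}(-1)$ is not trivial, and indeed $h^0(\mathcal{E}\otimes f^*\mathcal{O}(-y))=h^0(\mathcal{O}_X)=1\neq 0$ for every $y$. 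So no argument proceeding only from (ii) as stated can close your gap; whatever finishes the proof must specifically exclude such pullback configurations. (The paper's own proof is itself terse at exactly the corresponding point: it asserts that a nonzero kernel of $\mathcal{O}_Y^{dr}\to f_*\mathcal{E}$ must meet some constant sub-line-bundle $s(\mathcal{O}_Y)$ nontrivially, which is precisely what fails in the example above, where the kernel is the Euler sub-bundle $\mathcal{O}(-1)\subset\mathcal{O}^2$.) The upshot: the step you flagged as the main obstacle really is one, you have not carried it out, and as written neither the elementary bound $h^0\leq\operatorname{rank}+\deg$ nor any purely numerical manipulation will complete the argument.
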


Just as Ulrich sheaves for a subvariety of projective space are Gieseker-semistable with respect to the hyperplane polarization, $f-$Ulrich sheaves are Gieseker-semistable with respect to the pullback via $f$ of any given polarization on $Y$ (the proofs are virtually identical).  Proposition B then implies that the S-equivalence classes of $f-$Ulrich sheaves of rank $r$ on the curve $X$ are parametrized by a Brill-Noether locus in the moduli space $U_{X}(r,br/2)$ of semistable vector bundles on $X$ having rank $r$ and degree $br/2$; we may then rephrase the existence problem for Ulrich sheaves on projective varieties as a higher-rank Brill-Noether problem for curves.   It must be emphasized that the curves in question are special in moduli; indeed, they are coverings of aCM curves with homogeneous ideals generated in degree 3 or greater, all of which are irrational. 
          
\begin{corc*}
\label{cor-main}
Assume the hypotheses of Theorem A.  Then $X \subseteq \mathbb{P}^{n}$ admits an Ulrich sheaf of rank $r$ if and only if $X_{C}$ admits a globally generated vector bundle of rank $r$ with degree $br/2$ and $dr$ global sections.
\end{corc*}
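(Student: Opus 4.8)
The plan is to obtain the equivalence as a formal consequence of Theorem A and Proposition B, with the only real content being the compatibility of $f$-Ulrich sheaves with base change along the closed immersion $C \hookrightarrow \mathbb{P}^{m}$. I will identify an Ulrich sheaf on $X \subseteq \mathbb{P}^{n}$ with an $f$-Ulrich sheaf for the general linear projection $f : X \to \mathbb{P}^{m}$, which is legitimate because the two notions coincide by definition, and I write $f_{C} := f|_{X_{C}} : X_{C} \to C$ for the induced finite morphism. Since base change preserves degree, $f_{C}$ has degree $d$, and $X_{C} \to C$ is a finite morphism of smooth projective curves whose branch divisor we take to have degree $b$.

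For the implication from $X$ to $X_{C}$, suppose $\cE$ is an Ulrich sheaf of rank $r$ on $X$, so that $f_{\ast}\cE \cong \cO_{\mathbb{P}^{m}}^{dr}$. Pulling back along $C \hookrightarrow \mathbb{P}^{m}$ and using the base-change property recorded in the introduction, the restriction $\cE|_{X_{C}}$ is $f_{C}$-Ulrich, i.e. $(f_{C})_{\ast}(\cE|_{X_{C}}) \cong \cO_{C}^{dr}$. I would first verify that $\cE|_{X_{C}}$ is locally free: as $f_{C}$ is finite, any torsion subsheaf of $\cE|_{X_{C}}$ would push forward to a torsion subsheaf of the torsion-free sheaf $\cO_{C}^{dr}$ and hence vanish, so $\cE|_{X_{C}}$ is a torsion-free, and therefore locally free, sheaf on the smooth curve $X_{C}$. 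Proposition B in the direction (i)$\Rightarrow$(ii) then shows that $\cE|_{X_{C}}$ is a globally generated vector bundle of rank $r$ with $h^{0} = dr$ and $c_{1} = br/2$, which is exactly the object required.

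Conversely, suppose $X_{C}$ carries a globally generated vector bundle $\cE$ of rank $r$ with $\deg \cE = br/2$ and $h^{0}(\cE) = dr$. Proposition B in the direction (ii)$\Rightarrow$(i) makes $\cE$ an $f_{C}$-Ulrich sheaf on $X_{C}$, and since the standing hypotheses are precisely those of Theorem A, that theorem extends $\cE$ to an Ulrich sheaf $\mathcal{G}$ on $X$. This extension has rank $r$ because restriction to the curve $X_{C}$ preserves the generic rank and $\mathcal{G}|_{X_{C}} \cong \cE$ has rank $r$; equivalently, $f_{\ast}\mathcal{G} \cong \cO_{\mathbb{P}^{m}}^{dr}$ forces $\operatorname{rank}\mathcal{G} = r$. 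Hence $X$ admits an Ulrich sheaf of rank $r$, and the two directions together give the asserted equivalence.

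I expect the one genuinely delicate point to be the base-change step underlying the forward direction: one must know that restriction to $X_{C}$ commutes with $f_{\ast}$, which rests on the flatness of the general linear projection $f$ together with the transversality packaged in the hypothesis that $X_{C}$ is smooth, and one must confirm that the restricted sheaf is honestly locally free so that Proposition B applies. A secondary bookkeeping issue is to check that the integer $b$ appearing on the two sides is the same, namely the degree of the branch divisor of $f_{C}$; this should be verified against the branch locus of $f$ cut out on $C$, and one should also record that $C$ is smooth (or else replace the appeal to Proposition B by a direct Riemann--Roch computation using $h^{0}(\cO_{C}) = 1$) so that Proposition B is genuinely in force.
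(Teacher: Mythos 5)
Your proof is correct and follows essentially the route the paper intends: the forward direction is the base-change observation from the introduction combined with Proposition~B (i)$\Rightarrow$(ii), and the converse is Proposition~B (ii)$\Rightarrow$(i) followed by Theorem~A. The side points you flag (local freeness of the restriction, smoothness of $C$, and the identification of $b$ with the branch degree of $f|_C$) all check out---in particular $C$ is automatically regular by faithfully flat descent along the finite flat map from the smooth curve $X_C$---so there is no gap.
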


At this point it is natural to ask for examples of $f-$Ulrich bundles which are not obvious restrictions of ordinary Ulrich bundles.   A family of such examples is provided by correspondences of trivial type on products of curves, via Proposition \ref{prop:prod-curves}.  In the simplest case, $X$ is a member of the linear system $|\cL \boxtimes \cM|$ in $C \times Z$ for globally generated line bundles $\cL$ and $\cM$ of positive degree on $C$ and $Z,$ respectively; if $\cE$ is any vector bundle on $Z$ which is Ulrich with respect to $\cM$, and $f : X \to C$ is induced by projection, then the restriction of $\cO_{C} \boxtimes \cE$ to $X$ is $f-$Ulrich.  Here $\cE$ can have rank 1, resulting in an $f-$Ulrich line bundle. Ulrich bundles of rank 1 are rare enough that we do not expect such a case to ``occur in nature."  However, Proposition \ref{prop:prod-curves} also produces morphisms admitting higher-rank $f-$Ulrich bundles that cannot obviously be replaced by line bundles (compare Lemma \ref{lma:gf-ulrich}). 

Using a Clifford-type theorem for semistable vector bundles on curves due to V. Mercat (Theorem 2.1 in \cite{Mer}), we obtain a necessary condition for the existence of an $f-$Ulrich bundle, namely that $f$ has ``enough" ramification.  This generalizes with little effort to finite morphisms of smooth varieties of higher dimension (Corollary \ref{cor-bound-final-form}).


\begin{propd*}
Let $f:X \to Y$ be a degree-$d$ finite morphism of smooth projective curves whose branch divisor has degree $b$, where $d \geq 3,$ the Clifford index of $X$ is 2 or greater (i.e. $X$ is not hyperelliptic, trigonal, or a plane quintic), and $Y$ has positive genus.  Then if an $f-$Ulrich sheaf exists, we have $b \geq 4d,$ and this inequality is sharp for all $d \geq 3.$   
\end{propd*}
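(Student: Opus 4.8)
The plan is to treat the inequality and its sharpness separately, with the inequality coming from translating the $f$-Ulrich conditions of Proposition B into a Clifford-type bound and invoking Mercat's theorem.

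For the inequality, let $\cE$ be an $f$-Ulrich bundle of rank $r$. By Proposition B it is globally generated with $h^{0}(\cE)=dr$ and $\deg\cE=br/2$, and (as noted before the statement) it is slope-semistable. I would first record the two cohomological inputs that let Mercat's theorem apply: $h^{0}(\cE)=dr\geq 2r$ since $d\geq 3$, and, after combining Riemann--Roch on $X$ with the Riemann--Hurwitz relation $2g_{X}-2=d(2g_{Y}-2)+b$, the clean identity $h^{1}(\cE)=r\,d\,g_{Y}$, which is $\geq 2r$ because $g_{Y}\geq 1$ and $d\geq 3$. (The hypothesis $\mathrm{Cliff}(X)\geq 2$ already forces $g_{X}\geq 4$, so there is no low-genus obstruction.) The same computation gives $\mu(\cE)=b/2\leq g_{X}-1$, so $\cE$ is a semistable bundle contributing to the higher-rank Clifford index and lying in the first half of the slope range, and Mercat's Theorem 2.1 of \cite{Mer} yields $\mathrm{Cliff}(\cE)\geq\mathrm{Cliff}(X)\geq 2$. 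Since
\[
\mathrm{Cliff}(\cE)=\mu(\cE)-\frac{2\bigl(h^{0}(\cE)-r\bigr)}{r}=\frac{b}{2}-2(d-1)=\frac{b}{2}-2d+2,
\]
the inequality $\tfrac{b}{2}-2d+2\geq 2$ rearranges to $b\geq 4d$.

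For sharpness, for each $d\geq 3$ I would exhibit a morphism realizing equality via the product-of-curves construction of Proposition \ref{prop:prod-curves}. Take $C$ and $Z$ to be elliptic curves, $\cL$ a degree-$2$ globally generated line bundle on $C$, and $\cM$ a degree-$d$ (very ample) line bundle on $Z$; let $X\in|\cL\boxtimes\cM|$ be a general member, which is smooth and irreducible since $\cL\boxtimes\cM$ is globally generated and ample, and let $f=p_{C}|_{X}:X\to C$, a degree-$d$ cover with $Y=C$ of genus $1>0$. A short adjunction-plus-Riemann--Hurwitz computation gives $g_{X}=2d+1$ and branch degree $b=2\deg\cL\,(\deg\cM+g_{Z}-1)=4d$, so equality holds. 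Existence of an $f$-Ulrich bundle follows directly from Proposition \ref{prop:prod-curves}: any line bundle on $Z$ that is Ulrich with respect to $\cM$ (such bundles exist on the elliptic normal curve $Z\subset\mathbb{P}^{d-1}$) restricts, after tensoring by the pullback of $\cO_{C}$, to an $f$-Ulrich bundle on $X$. To verify $\mathrm{Cliff}(X)\geq 2$, I would use that the second projection $p_{Z}|_{X}:X\to Z$ is a degree-$2$ map onto an elliptic curve, so $X$ is bielliptic; applying the Castelnuovo--Severi inequality to this degree-$2$ map to a genus-$1$ curve together with a hypothetical pencil shows that a $g^{1}_{2}$ forces $g_{X}\leq 3$ and a $g^{1}_{3}$ forces $g_{X}\leq 4$, both contradicting $g_{X}=2d+1\geq 7$, so $X$ is neither hyperelliptic nor trigonal, while $g_{X}\neq 6$ rules out a plane quintic.

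The main obstacle I anticipate is making the application of Mercat's theorem airtight: I must match the precise hypotheses of his Theorem 2.1 (semistability together with the ``contributing'' conditions on $h^{0}$ and $h^{1}$, and the slope lying in the first half of the allowed range) to the numerics above, since it is exactly the $\mathrm{Cliff}(X)\geq 2$ refinement---rather than the naive semistable Clifford bound $h^{0}(\cE)\leq\tfrac{1}{2}\deg\cE+r$, which only yields $b\geq 4d-4$---that produces the sharp constant. The sharpness direction is comparatively routine once the product construction and the Castelnuovo--Severi verification are in place.
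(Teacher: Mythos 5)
Your sharpness construction is sound and is essentially the paper's own: Proposition \ref{prop:bielliptic-boundary} takes a curve of class $\mathcal{H}\boxtimes\mathcal{M}$ on $Y_d\times E$ with $Y_d$ hyperelliptic of genus $d-1$, and your variant with $C$ elliptic and $\deg\mathcal{L}=2$ works just as well (same adjunction/Riemann--Hurwitz count giving $b=4d$, same Castelnuovo--Severi argument for $\mathrm{Cliff}(X)\geq 2$, same appeal to Proposition \ref{prop:prod-curves} for existence). The problem is in the inequality half, and it sits exactly where you said you anticipated trouble. The assertion ``Mercat's Theorem 2.1 yields $\mathrm{Cliff}(\mathcal{E})\geq\mathrm{Cliff}(X)$'' is not Theorem 2.1: that statement, for all semistable bundles contributing to the Clifford index, is Mercat's \emph{conjecture}, and it is false in general. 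Theorem \ref{merc-cliff} is slope-stratified: on the range $2+\tfrac{2}{g_X-4}\leq\mu\leq 2g_X-4-\tfrac{2}{g_X-4}$ it gives $h^0(\mathcal{E})\leq\tfrac{1}{2}\deg\mathcal{E}$, which is equivalent to $\gamma(\mathcal{E})\geq 2$ and is all you need; but on the range $1\leq\mu\leq 2+\tfrac{2}{g_X-4}$ it only gives $h^0(\mathcal{E})\leq\tfrac{1}{g_X-2}(\deg\mathcal{E}-r)+r$, from which $\gamma(\mathcal{E})\geq 2$ does not follow (for $\mu$ near $2$ this bound permits $\gamma(\mathcal{E})$ slightly below $2$). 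The conditions you verify ($h^0(\mathcal{E})\geq 2r$, $h^1(\mathcal{E})=rdg_Y\geq 2r$, $\mu=b/2\leq g_X-1$) place $\mu$ below the upper threshold but do not place it above the lower one, so the low-slope case is not excluded and your displayed computation is not yet justified.

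What is missing is precisely the case analysis the paper carries out in Proposition \ref{bound-final-form}: assume case (ii) of Theorem \ref{merc-cliff} applies, substitute $h^0(\mathcal{E})=dr$, $\deg\mathcal{E}=br/2$, and $g_X-2=d(g_Y-1)+b/2-1$ into Mercat's bound, and observe that it forces $d\leq 2\bigl(1+\tfrac{g_Y-1}{g_X-2}\bigr)^{-1}\leq 2$, contradicting $d\geq 3$ (this is where the hypothesis $g_Y\geq 1$ earns its keep a second time). With case (ii) ruled out, case (i) applies and your computation $\gamma(\mathcal{E})=\tfrac{b}{2}-2(d-1)\geq 2$ closes the argument; note that you only ever need $\gamma(\mathcal{E})\geq 2$ rather than $\gamma(\mathcal{E})\geq\mathrm{Cliff}(X)$, so the appeal to the conjecture can and should be dropped entirely. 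As written, the proof of the inequality $b\geq 4d$ is incomplete.
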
 

The boundary example we construct for each $d \geq 3$ is a degree-$d$ cover $f$ of a hyperelliptic curve $Y$ by a bielliptic curve $X;$ we show that $f-$Ulrich line bundles exist, and also that they are parametrized by the elliptic curve of which $X$ is a double cover (Proposition \ref{prop:bielliptic-boundary}).  It must be noted that Theorem A cannot be directly applied to these examples, since the homogeneous ideal of an aCM hyperelliptic curve contains generators of degree 2 (compare Remark \ref{rmk:high-degree}).

\subsection*{Acknowledgments}

We would like to thank Ragnar-Olaf Buchweitz, Hailong Dao, and Rob Lazarsfeld for helpful discussions. Ian Shipman and Rajesh Kulkarni were partially supported by National Science Foundation under Grant No.~0932078-000 while in residence at the Mathematical Sciences Research Institute during the spring 2013 semester on noncommutative algebraic geometry.  Rajesh Kulkarni was partially supported by the National Science Foundation awards DMS-1004306 and  DMS-1305377. Ian Shipman was partially supported by the National Science Foundation award DMS-1204733.

\section{Preliminaries} \label{s:Prelims}

We work over an algebraically closed field $\bk$ of characteristic 0.  For the rest of the article, $S/R$ denotes a finite, flat and generically unramified degree-$d$ graded extension $R \subset S$ of finitely generated commutative graded $\bk-$algebras $R$ and $S$ which are both generated in degree 1 with degree 0 part equal to $\bk.$

\begin{dfn}
Given a finitely generated graded $R-$module $N,$ a finitely generated graded $S-$module $M$ is a \textit{lift of $N$ via $S/R$} if $M$ is isomorphic to $N$ as a graded $R-$module. 
\end{dfn}

In the sequel we will use that fact that a lift of $N$ for $S/R$ is equivalent to the datum of a graded $R-$linear map $S \otimes_{R} N \to N$ which gives $N$ the structure of a graded $S-$module. 

Our main concern is the case where $N$ is a free graded $R-$module, i.e. when $N \cong \oplus_{j=1}^{t}R(-a_{j})$ for integers $a_{1}, \cdots ,a_{j}.$  The following result is elementary and straightforward, but we give an argument below due to its importance for the proof of Theorem A.    

\begin{prop}
\label{prop:normal-lift}
Assume $R$ is normal, and let $N \cong \oplus_{j=1}^{t}R(-a_{j}).$  If $X={\rm Proj}(S),Y={\rm Proj}(R),$ and $f:X \to Y$ is the map induced by $S/R,$ then the standard formation of graded $S-$modules from coherent sheaves on $X$ yields a one-to-one correspondence between lifts of $N$ for $S/R$ and coherent sheaves on $X$ whose direct image via $f$ is $\oplus_{j=1}^{t}\cO_{Y}(-a_{j}).$ 
\end{prop}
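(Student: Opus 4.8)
The plan is to establish the bijection by relating both sides to the standard equivalence between coherent sheaves on $X=\Proj(S)$ and the category of finitely generated graded $S$-modules up to the Serre subcategory of modules supported in finitely many degrees (equivalently, $\cO_X$-coherent sheaves correspond to graded $S$-modules modulo those of finite length, via $\cE \mapsto \Gamma_*(\cE) = \oplus_n \rmH^0(X,\cE(n))$ and $M \mapsto \widetilde{M}$). First I would fix notation: write $\Phi: \mathrm{Coh}(X) \to \mathrm{grMod}(S)$ for the sheaf-to-module functor $\cE \mapsto \oplus_n \rmH^0(X,\cE(n))$, and recall that $\widetilde{\Phi(\cE)} \cong \cE$ always, with $\Phi$ fully faithful on its essential image. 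The key compatibility I would verify is that $\Phi$ intertwines pushforward with restriction of scalars: for a coherent sheaf $\cE$ on $X$, the graded $S$-module $\Phi(\cE)$, viewed as an $R$-module via $R \subset S$, is naturally isomorphic to $\Phi_Y(f_*\cE)$, where $\Phi_Y$ is the analogous functor on $Y = \Proj(R)$. This is where normality of $R$ enters decisively.

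**The role of normality.**

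The main step is to show that $\Phi(\cE)$ as an $R$-module computes $f_*\cE$ correctly, i.e. that $\oplus_n \rmH^0(X,\cE(n)) \cong \oplus_n \rmH^0(Y,(f_*\cE)(n))$ as graded $R$-modules. Since $f$ is finite, $\rmH^0(X,\cE(n)) = \rmH^0(Y, f_*(\cE(n)))$ and by the projection formula $f_*(\cE(n)) \cong (f_*\cE)(n)$ because $\cO_X(1) = f^*\cO_Y(1)$; this last point follows from $S$ and $R$ being generated in degree 1 with the grading induced from $R$. Thus the module computed on $X$ agrees with the one computed on $Y$ degree by degree as an $R$-module. Normality of $R$ (hence of $Y$) is what guarantees that the free graded $R$-module $N = \oplus_j R(-a_j)$ is saturated and equals $\Phi_Y(\widetilde{N})$ with $\widetilde{N} \cong \oplus_j \cO_Y(-a_j)$; more precisely, $\rmH^0(Y,\cO_Y(-a_j+n)) = R_{n-a_j}$ for a normal projectively normal $R$, so the graded $R$-module of sections of $\oplus_j \cO_Y(-a_j)$ recovers $N$ exactly rather than just up to finite length. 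I expect this saturation statement to be the main obstacle, since without normality one only recovers $N$ up to a module of finite length, and the correspondence would fail to be a clean bijection.

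**Assembling the bijection.**

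With these pieces in place, I would argue as follows. Given a coherent sheaf $\cE$ on $X$ with $f_*\cE \cong \oplus_j \cO_Y(-a_j)$, applying $\Phi$ and restricting scalars to $R$ yields, by the compatibility above, $\Phi_Y(f_*\cE) \cong \Phi_Y(\oplus_j \cO_Y(-a_j)) \cong N$, so $\Phi(\cE)$ is a lift of $N$ via $S/R$. Conversely, given a lift $M$ of $N$ — that is, a finitely generated graded $S$-module isomorphic to $N$ as an $R$-module — I would form $\widetilde{M}$ on $X$ and check that $f_*\widetilde{M} \cong \oplus_j \cO_Y(-a_j)$; this again uses the projection formula together with the fact that sheafification commutes with $f_*$ for finite morphisms, and that $\widetilde{N} \cong \oplus_j \cO_Y(-a_j)$. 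Finally I would confirm the two constructions are mutually inverse: $\widetilde{\Phi(\cE)} \cong \cE$ is the standard saturation statement on $X$, and $\Phi(\widetilde{M}) \cong M$ holds because $M$, being free as an $R$-module and hence saturated over $R$, is saturated over $S$ as well — here again normality of $R$ (propagated to the relevant finiteness/depth condition on $M$) is what ensures $M$ equals its own saturation, closing the loop. The upshot is that the functors $\cE \mapsto \Phi(\cE)$ and $M \mapsto \widetilde{M}$ furnish the desired one-to-one correspondence.
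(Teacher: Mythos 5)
Your proposal is correct and follows essentially the same route as the paper: identify $\oplus_n \rmH^0(X,\cE(n))$ with $\oplus_n \rmH^0(Y,(f_*\cE)(n))$ via finiteness of $f$ and the projection formula, and use normality of $R$ together with degree-one generation to see that the section module of $\oplus_j\cO_Y(-a_j)$ is exactly $N$. Your added remark that a lift $M$, being free over $R$, is automatically saturated over $S$ is a correct elaboration of the step the paper dismisses as ``even easier.''
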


\begin{proof}
Let $\cE$ be a coherent sheaf on $X$ for which $f_{\ast}\cE \cong \oplus_{j=1}^{t}\cO_{Y}(-a_{j}).$  Then $M:=\oplus_{i \in \mathbb{Z}}\rmH^{0}(\cE(i))$ is a finitely generated graded $S-$module, and we have that as a graded $R-$module,
\begin{equation}
M=\oplus_{i \in \mathbb{Z}}\rmH^{0}(\cE(i)) \cong \oplus_{i \in \mathbb{Z}}\rmH^{0}(f_{\ast}\cE(i)) \cong \oplus_{j=1}^{t}\oplus_{i \in \mathbb{Z}}\rmH^{0}(\cO_{Y}(-a_{j}+i))
\end{equation}
By the assumption that $R$ is normal and generated in degree 1, we have that $R(a_{j}) \cong \oplus_{i \in \mathbb{Z}}\rmH^{0}(\cO_{Y}(-a_{j}+i))$ for $1 \leq j \leq t,$ so we may conclude that $M$ is isomorphic to $N$ as a graded $R-$module, i.e. that $M$ is a lift of $N$ for $S/R.$  

Conversely, starting with a lift of $N$ for $S/R,$ it is even easier to see that the associated coherent sheaf $\cE$ on $X$ satisfies $f_{\ast}\cE \cong \oplus_{j=1}^{t}\cO_{Y}(-a_{j}),$ and that the two constructions are inverse to one another.
\end{proof}

\begin{cor}
Under the hypotheses of Proposition \ref{prop:normal-lift}, a coherent sheaf on $X$ is $f-$Ulrich if and only if for some positive integer $t,$ its induced graded $S-$module is a lift of $R^{\oplus t}$ for $S/R.$ \hfill \qedsymbol
\end{cor}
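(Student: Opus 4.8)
The plan is to read this off directly from Proposition \ref{prop:normal-lift} by specializing to the case in which every twist vanishes. Taking $N = \oplus_{j=1}^{t} R(-a_j)$ with $a_1 = \cdots = a_t = 0$ gives $N = R^{\oplus t}$, and the corresponding sheaf on $Y$ is $\oplus_{j=1}^{t} \cO_Y(-a_j) = \cO_Y^{\oplus t}$. Proposition \ref{prop:normal-lift} then provides a one-to-one correspondence between lifts of $R^{\oplus t}$ for $S/R$ and coherent sheaves $\cE$ on $X$ with $f_{\ast}\cE \cong \cO_Y^{\oplus t}$. Thus the assertion reduces to showing that, for a coherent sheaf $\cE$ on $X$, having $f_{\ast}\cE \cong \cO_Y^{\oplus t}$ for some positive integer $t$ is equivalent to $\cE$ being $f$-Ulrich.

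For the forward direction, if $\cE$ is $f$-Ulrich of rank $r$ then $f_{\ast}\cE \cong \cO_Y^{dr}$ by definition, so choosing $t = dr$ exhibits its induced graded $S$-module as a lift of $R^{\oplus dr}$ under the correspondence. For the converse, suppose $f_{\ast}\cE \cong \cO_Y^{\oplus t}$; I would then need only identify $t$ with $dr$, where $r$ is the rank of $\cE$. This is where I use that $S/R$ is finite, flat, and generically unramified of degree $d$: localizing at the generic point of $Y$ and writing $K = \mathrm{Frac}(R)$ (note that $R$ is a connected graded normal ring, hence a domain, so $Y$ is irreducible), the algebra $S \otimes_R K$ is a $d$-dimensional \'etale $K$-algebra, and the stalk of $f_{\ast}\cE$ there has $K$-dimension $d \cdot r$. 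Since $\cO_Y^{\oplus t}$ has rank $t$, this forces $t = dr$, so $f_{\ast}\cE \cong \cO_Y^{dr}$ and $\cE$ is $f$-Ulrich.

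I do not anticipate any real obstacle here: the entire content is supplied by Proposition \ref{prop:normal-lift}, and the only auxiliary input is the multiplicativity of rank under a finite flat pushforward, $\mathrm{rank}(f_{\ast}\cE) = d \cdot \mathrm{rank}(\cE)$, which is standard once one passes to the generic point. The sole point requiring mild care is that ``rank'' be unambiguous, which is guaranteed by the irreducibility of $Y$ (and, in the cases of interest, of $X$); absent irreducibility of $X$ one simply reads $r$ as the common rank of $\cE$ along the branches of a general fiber, so that the displayed identity still holds.
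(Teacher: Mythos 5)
Your argument is correct and is essentially the paper's own: the corollary is left as an immediate consequence of Proposition \ref{prop:normal-lift} (hence the \qedsymbol in its statement), obtained exactly as you do by setting all $a_j = 0$. Your extra observation that $t$ is forced to equal $dr$ by the multiplicativity of generic rank under the finite flat pushforward is the only bookkeeping the paper leaves implicit, and you handle it correctly.
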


We will also need the following lemma.   

\begin{lma} \label{lem:restriction}
Let $J \subset R$ be a homogeneous prime ideal such that $Z = \Proj(R/J) \subset Y = \Proj(R)$ is nonempty, and $X_Z = \Proj(S/J S)$.  Write $g:X_Z \to Z$ for the morphism induced by $R/J \to S/JS$.  Suppose $N \cong \oplus_{j=1}^t R(a_j)$ and let $\wt{\alpha}:S \tensor_R N \to N$ be a graded $R$-module homomorphism.  Let $\cE$ be the sheafification of $N$.  Then the induced morphism $\alpha:S/JS \tensor_{R/J} N/JN \to N/JN$ equips $N/J N$ with an $S/JS$-module structure if and only if the morphism $g_* \cO_{X_Z} \tensor_{\cO_Z} \cE|_Z \to \cE|_Z$ equips $\cE|_Z$ with a $g_*\cO_{X_Z}$-module structure.
\end{lma}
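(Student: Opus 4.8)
The plan is to transport the entire question through the sheafification functor $(-)^{\sim}$ from finitely generated graded $R/J$-modules to coherent sheaves on $Z=\Proj(R/J)$, exploiting three of its formal properties: it is exact, it is compatible with tensor products, and---on homomorphisms whose target is the free module $N/JN$---it is faithful. Both the hypothesis and the conclusion assert that a prospective action map makes its source a module over an algebra, which amounts to two identities (associativity and unitality). Since these are equalities of maps between graded $R/J$-modules on one side and of $\cO_Z$-module maps on the other, it suffices to check that $(-)^{\sim}$ carries one pair of identities to the other and does so faithfully.

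First I would record the dictionary between the two settings. Because $S/JS$ is finite over $R/J$ and $X_Z=\Proj(S/JS)$, one has $g_{\ast}\cO_{X_Z}=(S/JS)^{\sim}$ as an $\cO_Z$-algebra, with multiplication and unit obtained by sheafifying $S/JS\otimes_{R/J}S/JS\to S/JS$ and $R/J\to S/JS$. Likewise $\cE|_Z=(\widetilde N)|_Z=(N/JN)^{\sim}$. Using that sheafification commutes with tensor products of finitely generated graded modules, $(S/JS\otimes_{R/J}N/JN)^{\sim}\cong g_{\ast}\cO_{X_Z}\otimes_{\cO_Z}\cE|_Z$, and under this identification $\alpha^{\sim}$ is precisely the morphism $g_{\ast}\cO_{X_Z}\otimes_{\cO_Z}\cE|_Z\to\cE|_Z$ of the statement (equivalently, the restriction to $Z$ of the sheafification of $\widetilde\alpha$). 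The same identifications hold for the threefold tensor product $S/JS\otimes_{R/J}S/JS\otimes_{R/J}N/JN$, so each of the two module axioms for $\alpha$ sheafifies term by term to the corresponding axiom for the sheaf action. The forward implication is then immediate: if $\alpha$ makes $N/JN$ an $S/JS$-module, applying $(-)^{\sim}$ to the associativity and unit identities yields the module axioms for $\cE|_Z$ over $g_{\ast}\cO_{X_Z}$.

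The converse is the crux, and rests on a single faithfulness statement: a graded $R/J$-linear map $\phi$ with target $N/JN$ satisfying $\phi^{\sim}=0$ must itself vanish. Granting this, I would apply it to the two difference maps measuring failure of associativity and of unitality for $\alpha$; their sheafifications are the corresponding discrepancies for the sheaf action, which vanish by hypothesis, so the difference maps vanish and $\alpha$ defines a module structure. For the faithfulness statement, observe that $\operatorname{im}\phi$ is a finitely generated graded submodule of $N/JN$ with $(\operatorname{im}\phi)^{\sim}=\operatorname{im}(\phi^{\sim})=0$ by exactness of $(-)^{\sim}$, hence is annihilated by some power of the irrelevant ideal $(R/J)_{+}$, in particular by a power of a nonzero $x\in(R/J)_1$. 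This is where the hypotheses are indispensable: $J$ prime makes $R/J$ a domain, so $x$ and all its powers are non-zero-divisors; $N$ free makes $N/JN\cong\oplus_j (R/J)(a_j)$ free, hence torsion-free; and $Z\neq\emptyset$ forces $\dim(R/J)\geq 1$, guaranteeing such an $x$ exists. A torsion-free module has no nonzero submodule killed by a non-zero-divisor, so $\operatorname{im}\phi=0$. The main obstacle is precisely this faithfulness---that passage to sheaves over $Z$ loses no information about homomorphisms into $N/JN$---and for a non-free $N$ or a non-prime $J$ it would in general fail.
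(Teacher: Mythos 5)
Your proposal is correct and follows essentially the same route as the paper: reduce both module axioms to the vanishing of difference maps of graded $R/J$-modules with target $N/JN$, and observe that such a map vanishes if and only if its sheafification does, because the image of a map that sheafifies to zero is annihilated by a power of the irrelevant ideal while $N/JN \cong \oplus_j (R/J)(a_j)$ has no such elements when $J$ is prime and $Z$ is nonempty. Your elaboration of the faithfulness step via a degree-one non-zero-divisor and torsion-freeness is a slightly more detailed justification of the sentence the paper states without proof, but the argument is the same.
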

\begin{proof}
If $\mu:S/J S \tensor_{R/J} S/J S \to S/J S$ is the multiplication map and $i:N/J N = R/J \tensor_{R/J} N/JN \to S/J \tensor_{R/J} N/J N$ is the natural inclusion, then $\alpha: S/JS \tensor_{R/J} N/JN \to N/JN$ equips $N/JN$ with an $S/JS-$module structure if and only if (i) $\alpha(1 \tensor n ) = n$ for all $n \in N/JN$ and (ii) $\alpha \circ (\mu \tensor \id) = \alpha \circ (\id \tensor \alpha)$ as maps.  We can then reformulate (i) and (ii) as the vanishing of the $R/J$-module maps $i - \id$ and $\alpha \circ (\mu \tensor \id) - \alpha \circ (\id \tensor \alpha)$.  In general, if $M \to M'$ is a map of finitely generated, graded $R$-modules which induces the zero map on sheafifications, the image of the map is contained in the submodule of elements annihilated by a power of the irrelevant ideal of $R$.  Since $\Proj(R/J)$ is nonempty, $N/JN$ has no elements that are annihilated by a power of the irrelevant ideal of $R$.  This means that $i - \id$ and $\alpha \circ (\mu \tensor \id) - \alpha \circ (\id \tensor \alpha)$ are zero if and only if they are zero after sheafification.  This concludes the proof.
\end{proof}

\section{Reduction to curves} \label{s:extension}

This section contains the proof of Theorem A.  For the rest of the section, we fix a finite flat morphism $f:X \to Y$ of projective varieties and a very ample line bundle $\cO_{Y}(1)$ on $Y$ such that $\cO_{X}(1):=f^{\ast}\cO_{Y}(1)$ is very ample, and we let $S=\oplus_{i \geq 0}\rmH^{0}(X,\mathcal{O}_{X}(i))$, $R:=\oplus_{i \geq 0}\rmH^{0}(Y,\cO_{Y}(i))$.  We define a subscheme $Z \subset Y$ to be \emph{$m$-separating} if for $0 \leq i \leq m$ we have $\rm{H}^0(Y,\cI_{Z|Y}(i)) = 0.$
 
\begin{prop}
\label{prop:general-lift}
Assume $R$ is normal.  Let $\cE = \oplus_{j=1}^{t}\cO_{Y}(-a_{j})$ for some $a_1, \cdots ,a_{j} \in \mathbb{Z}.$  Then there exists a positive integer $\underline{n} = \underline{n}(f,{\cO_Y(1)},\cE)$ such that if $Z \subset Y$ is an $\underline{n}-$separating subscheme satisfying: 
\begin{itemize}
\item[(i)]{$\cE|_Z$ lifts to $X_Z := X \times_Y Z$.}
\item[(ii)]{The associated morphism $(f_Z)_*\cO_{X_Z} \tensor_{\cO_Z} \cE|_Z \to \cE|_Z$ endowing $\cE|_{Z}$ with the structure of an $f_{\ast}\cO_{X_Z}-$module is the restriction to $Z$ of an $\cO_Y$-linear morphism $f_*\cO_X \tensor_{\cO_Y} \cE \to \cE.$}
\end{itemize} 
then $\cE$ lifts to $X$.
\end{prop}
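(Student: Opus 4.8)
The plan is to recast the lifting problem in terms of graded modules, following Proposition \ref{prop:normal-lift}, and then to show that the $\cO_Y$-linear morphism supplied by hypothesis (ii) is \emph{automatically} an honest module action once $\underline n$ is large enough. Write $R=\oplus_i\rmH^{0}(Y,\cO_Y(i))$, $S=\oplus_i\rmH^{0}(X,\cO_X(i))$, and let $N:=\oplus_i\rmH^{0}(\cE(i))\cong\oplus_{j=1}^{t}R(-a_j)$ be the free graded $R$-module attached to $\cE$; this identification uses normality of $R$ exactly as in Proposition \ref{prop:normal-lift}. By that proposition, producing a lift of $\cE$ to $X$ is the same as upgrading the $R$-module $N$ to an $S$-module, i.e.\ finding a graded $R$-linear map $\widetilde\alpha:S\otimes_R N\to N$ that is unital and associative. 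Hypothesis (ii) hands me a candidate: applying $\Gamma_*=\oplus_i\rmH^{0}(\,\cdot\,(i))$ to the morphism $f_*\cO_X\otimes_{\cO_Y}\cE\to\cE$ yields such a map, since the projection formula (valid because $\cO_X(1)=f^*\cO_Y(1)$) gives $\Gamma_*(f_*\cO_X\otimes_{\cO_Y}\cE)=S\otimes_R N$. So the task reduces to forcing this $\widetilde\alpha$ to satisfy the module axioms.

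I would measure the failure by the two defects
\[
U:=\widetilde\alpha\circ(\eta\otimes\id_N)-\id_N\in\mathrm{Hom}_R(N,N)_0,\qquad A:=\widetilde\alpha\circ(\mu\otimes\id_N)-\widetilde\alpha\circ(\id_S\otimes\widetilde\alpha)\in\mathrm{Hom}_R(S\otimes_R S\otimes_R N,\,N)_0,
\]
where $\eta:R\to S$ is the unit and $\mu:S\otimes_R S\to S$ the multiplication; $\widetilde\alpha$ is a module structure precisely when $U=0$ and $A=0$. The content of hypotheses (i) and (ii), read through Lemma \ref{lem:restriction}, is that modulo the saturated ideal $J$ of $Z$ the map $\widetilde\alpha$ already defines a module action on $N/JN$; equivalently $\mathrm{im}(U)\subseteq JN$ and $\mathrm{im}(A)\subseteq JN$. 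Everything then comes down to propagating these containments from ``inside $J$'' to ``zero,'' and this is exactly where the $\underline n$-separating hypothesis enters, via $J_i=\rmH^{0}(Y,\cI_{Z|Y}(i))=0$ for $0\le i\le\underline n$.

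For the unit defect this is immediate: since $N$ is free, $\mathrm{Hom}_R(N,N)_0=\oplus_{j,k}R_{\,a_k-a_j}$, so the entries of $U$ are homogeneous elements lying in $J_{\,a_k-a_j}$, which vanish as soon as $a_k-a_j\le\underline n$; thus $U=0$ whenever $\underline n\ge\max_j a_j-\min_j a_j$. The associativity defect is the crux, and it is the one place where the failure of $S\otimes_R S$ to be dissoci\'{e} would derail a naive sheaf-level argument, since one cannot tensor an inclusion into a sum of line bundles with the non-flat sheaf $\cI_{Z|Y}$ and keep injectivity. The remedy is to test $A$ on generators. As $S$ is module-finite over $R$, so is $T:=S\otimes_R S$; I fix a finite homogeneous generating set $t_1,\dots,t_p$ of $T$ over $R$ and set $D:=\max_i\deg t_i$, a quantity depending only on $f$ and $\cO_Y(1)$. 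The products $t_i\otimes n_l$, with $n_l$ the standard degree-$a_l$ generators of $N$, generate $T\otimes_R N$ over $R$, so by $R$-linearity it suffices that each $A(t_i\otimes n_l)=0$. This element lies in $JN$ and is homogeneous of degree $\deg t_i+a_l$, so its component in the summand $R(-a_{l'})$ lies in $J_{\,\deg t_i+a_l-a_{l'}}$, which vanishes once $\deg t_i+a_l-a_{l'}\le\underline n$. Hence taking
\[
\underline n:=D+\max_j a_j-\min_j a_j
\]
(which also absorbs the bound needed for $U$) forces $A(t_i\otimes n_l)=0$ for every generator, so $A=0$.

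With $U=0$ and $A=0$, the map $\widetilde\alpha$ equips $N$ with a graded $S$-module structure compatible with its $R$-structure, and Proposition \ref{prop:normal-lift} converts this back into a coherent sheaf on $X$ whose direct image is $\cE$; that is, $\cE$ lifts to $X$. The main obstacle is genuinely the associativity defect, and the key realization is that a \emph{uniform} bound $D$ on the generator degrees of $S\otimes_R S$ lets a single $\underline n$ work for every admissible $Z$, which is precisely what makes $\underline n$ depend only on $(f,\cO_Y(1),\cE)$. I expect the remaining delicate points to be routine but worth care: checking that $\Gamma_*$ of the morphism in (ii) really is the datum $\widetilde\alpha$ whose defects I compute (so that passing between the sheaf morphism and the graded map does not alter the relevant degrees), and the bookkeeping that identifies $\mathrm{im}(U),\mathrm{im}(A)\subseteq JN$ with the mod-$J$ module axioms through Lemma \ref{lem:restriction}, including the mild point that $N/JN$ carries no irrelevant-ideal torsion, which is where nonemptiness of $Z$ is used.
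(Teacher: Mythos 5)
Your proof is correct, but it takes a genuinely different route from the paper's. The paper does not work with the associativity defect of $\widetilde\alpha$ on all of $S\otimes_R S\otimes_R N$; instead it rebuilds the action from its degree-one part. There, the map $\alpha'$ tautologically defines a $T^{\bullet}\rmH^{0}(\cO_X(1))$-module structure on $M=\oplus_i\rmH^{0}(\cE(i))$; commutativity of the degree-one operators is checked on elements of degree at most $2+a_t$, where $\alpha'$ and its mod-$J$ reduction coincide, so the action descends to $\wt R=R\otimes_{\bk}{\rm Sym}^{\bullet}\rmH^{0}(\cO_X(1))$; finally the defining ideal $I_X=\ker(\wt R\to S)$ is shown to annihilate $M$ by testing its generators (of degree at most $n_X$) against the module generators of $M$ (of degree at most $a_t$). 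This yields $\underline n=\max\{2,n_X\}+a_t$. You instead keep the given map $\widetilde\alpha$ and kill its unit and associativity defects directly, by evaluating the $R$-linear map $A$ on a finite homogeneous generating set of $(S\otimes_R S)\otimes_R N$ over $R$; your constant is $\underline n=D+\max_j a_j-\min_j a_j$, with $D$ the top generator degree of $S\otimes_R S$ as an $R$-module. Both arguments rest on the same two pillars --- Lemma \ref{lem:restriction} to convert hypothesis (ii) into ``the defects land in $JN$,'' and the $\underline n$-separating condition $J_i=0$ for $i\le\underline n$ to force low-degree elements of $JN$ to vanish --- and your reduction to generators is sound, since a graded $R$-linear map vanishes if and only if it vanishes on module generators. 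Your route is arguably cleaner: it avoids choosing a presentation of $S$ and the detour through the tensor and symmetric algebras. What the paper's route buys is the specific numerical value of $\underline n$: in the proof of Theorem A (where all $a_j=0$) the constant $\max\{2,n_X\}$ is identified with the bound $N$ on the generator degrees of $I_{X|\mathbb{P}^{n}}$, which is exactly the hypothesis of that theorem. Your $D$ is not obviously bounded by $N$, so while your argument fully proves the proposition as stated (which asserts only the existence of some $\underline n$ depending on $(f,\cO_Y(1),\cE)$), recovering Theorem A from it would require an additional comparison of $D$ with $N$.
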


\begin{proof}
Without loss of generality, we may assume $\cE$ to be normalized, so that $a_{t} \geq \cdots \geq a_{2} \geq a_{1} \geq 0$ and at least one of the $a_{j}$ is zero.  Let $M=\oplus_{i \geq 0}\rm{H}^{0}(Y,\cE(i)).$  Consider the natural map of graded $\bk$-algebras  
\begin{equation}
\wt{R} := R \otimes_\bk \Sym^{\bt}\rm{H}^0(X,\cO_X(1)) \to S
\end{equation}
let $I_X$ be the kernel, and put $\wt{S} = \wt{R} / I_X$.  Since $R$ is normal and $\Sym^{\bt}\rm{H}^0(X,\cO_{X}(1))$ is generated in degree 1, it follows at once that $\wt{R}$ and $\wt{S}$ are generated in degree 1.  The morphism of projective varieties induced by the map $\wt{R} \rightarrow S$ may then be described as 
\begin{equation}
X \hookrightarrow {\cJ}(Y,\mathbb{P}H^{0}(\mathcal{O}_{X}(1))) \hookrightarrow \mathbb{P}(H^{0}(\mathcal{O}_{Y}(1)) \oplus H^{0}(\mathcal{O}_{X}(1)))
\end{equation}
where ${\cJ}(Y,\mathbb{P}H^{0}(\mathcal{O}_{X}(1)))$ is the linear join of $Y \subseteq \mathbb{P}H^{0}(\mathcal{O}_{Y}(1))$ and $\mathbb{P}H^{0}(\mathcal{O}_{X}(1)).$  We define $n_X$ to be the maximum degree of an element in a minimal generating set of $I_X$ (note that $\wt{R}$ is graded by total degree).  It will be shown that $\underline{n} := \max\{2,n_X\} + a_t$ satisfies the desired property.  

Let $Z \subset Y$ be an $\underline{n}$-separating subscheme satisfying conditions (i) and (ii), let $J \subseteq R$ be the homogeneous ideal of $Z,$ and let $\alpha:f_*\cO_X \tensor_{\cO_Y} \cE \to \cE$ be the $\cO_{Y}$-linear map of sheaves whose restriction $\alpha|_Z$ equips $\cE|_Z$ with an $(f_Z)_*\cO_{X_Z}$-module structure.  By Proposition \ref{prop:normal-lift} and Lemma \ref{lem:restriction}, this corresponds to a map ${\alpha}' : S \tensor_{R} M \to M$ of graded $R-$modules for which the induced map $\bar{\alpha}': S/JS \tensor_{R/J} M/JM \to M/JM$ equips $M/JM$ with a graded $S/JS$-module structure.  Note that by the $\underline{n}-$separatedness of $Z$ we have $J_i = 0$ if $i \leq \underline{n},$ so that the maps $S \to S/JS$ and $M \to M/JM$ are isomorphisms in degrees less than or equal to $\underline{n}$; an immediate consequence (which will be used momentarily) is that ${\alpha}'$ and $\bar{\alpha}'$ can be identified for elements of $S \otimes_{R} M$ having sufficiently low degree.  We will be done once we check that ${\alpha}'$ defines a graded $S$-module structure on $M$.  

Observe that ${\alpha}'$ induces a graded $T^{\bt}\rm{H}^{0}(\mathcal{O}_{X}(1))-$module structure on $M$ as follows:  for $x_{1} , \cdots ,x_{k} \in \rm{H}^{0}(\mathcal{O}_{X}(1))$ and $m \in M,$
\begin{equation}
(x_{1} \tensor \cdots \tensor x_{k}) \cdot m := {\alpha}'(x_{1} \tensor ({\alpha}'( \cdots \otimes {\alpha}'(x_{k} \otimes m))))
\end{equation}
This in turn determines a natural $R \otimes_{\bk} T^{\bt}\rm{H}^{0}(\mathcal{O}_{X}(1))$-module structure on $M$.  Suppose that $x,y \in \rm{H}^0(X,\cO_X(1))$ and $m \in M_i$, with $i \leq a_t$.  Then since $x \tensor {\alpha}'(y \tensor m)$ has degree at most $2+a_{t}$, we see from our assumption on $\bar{\alpha}'$ that  
\begin{equation}
{\alpha}'(x \tensor {\alpha}'(y \tensor m)) = \bar{\alpha}'(x \tensor \bar{\alpha}'(y \tensor m)) = \bar{\alpha}'(y \tensor \bar{\alpha}'(x \tensor m))
\end{equation}  
It then follows from symmetry that ${\alpha}'(x \tensor {\alpha}'(y \tensor m)) = {\alpha}'( y \tensor {\alpha}'(x \tensor m))$.  As a consequence, the $R \otimes_{\bk} T^{\bt}\rm{H}^{0}(\mathcal{O}_{X}(1))$-module structure on $M$ induces a map $\wt{\alpha}' : \Rt \tensor_{R} M \to M$ which gives $M$ a graded $\Rt$-module structure.  Now, for $r \in I_X$ of degree at most $n_X$ and $m \in M$ of degree at most $a_{t},$ the fact that $r$ is zero in $S/JS$ implies that ${\alpha}'(r \tensor m) = \bar{\alpha}'(r \tensor m) = 0$.  Since $M$ is generated in degrees up to $a_{t}$ and $I_X$ is generated in degrees up to $n_X$ we see that $\wt{\alpha}'$ annihilates $I_{X} \otimes_{R} M$, and therefore descends to ${\alpha}' : S \tensor_{R} M \to M$.  This concludes the proof.
\end{proof}

We will now see that the condition (ii) in the statement of Proposition \ref{prop:general-lift} is satisfied in many cases of interest. 

\begin{lma}
\label{lma:acm-check}
Let $f:X \to \mathbb{P}^{m}$ be a finite flat morphism, and let $C \subset \mathbb{P}^{m}$ be an $\underline{n}-$separating aCM curve in $\mathbb{P}^{m}$.  Then if $\cE \cong \oplus_{j=1}^{dr}\cO_{\mathbb{P}^{m}}(-a_j)$ where $\displaystyle\max_{i,j}|a_{i}-a_{j}| < \underline{n},$ the curve $C$ satisfies condition (ii) in Proposition \ref{prop:general-lift}.     
\end{lma}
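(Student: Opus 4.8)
The plan is to verify (ii) after translating it into graded modules, where it becomes a concrete lifting problem that I would solve by hand. Write $R=\oplus_i\rmH^0(\mathbb{P}^m,\cO(i))$, $S=\oplus_i\rmH^0(X,\cO_X(i))$, let $J\subseteq R$ be the homogeneous ideal of $C$, and let $M=\oplus_{j}R(-a_j)$ be the free graded $R$-module with $\wt{M}=\cE$, so that $\mathcal{A}:=\mathrm{End}_R(M)=\oplus_{i,j}R(a_j-a_i)$ is again free and $\mathrm{End}_{R/J}(M/JM)=\mathcal{A}/J\mathcal{A}$. By Proposition \ref{prop:normal-lift} and Lemma \ref{lem:restriction}, the module structure supplied by condition (i) is an $R/J$-algebra homomorphism $\sigma:S/JS\to\mathcal{A}/J\mathcal{A}$, and condition (ii) amounts exactly to the assertion that $\sigma$ is the reduction modulo $J$ of some $R$-linear map $S\to\mathcal{A}$. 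The obstruction to lifting lives in $\mathrm{Ext}^1_R(S,J\mathcal{A})$, which need not vanish in general (a Serre-duality computation shows it is nonzero already when $X$ has intermediate cohomology), so the real content is that the particular class of $\sigma$ vanishes; I would prove this by exhibiting a lift built from the multiplicative structure.

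First I would lift in degree one: since $\mathcal{A}_1\to(\mathcal{A}/J\mathcal{A})_1$ is surjective, pick a $\bk$-linear map $\rho_1:S_1\to\mathcal{A}_1$ reducing to the degree-one part of $\sigma$. Next I would pass to the join presentation from the proof of Proposition \ref{prop:general-lift}: the polynomial ring $\wt{R}=R\otimes_\bk\Sym^{\bt}\rmH^0(\cO_X(1))$ surjects onto $S$ with kernel $I_X$ generated in degrees at most $n_X$. Because $S$ is generated in degree one, I would extend $\rho_1$ to an $R$-algebra homomorphism $\wt\rho:\wt{R}\to\mathcal{A}$ sending each degree-one generator $y$ to $\rho_1(y)$. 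This is legitimate provided the operators $\rho_1(y)$ commute in $\mathcal{A}$, and the resulting $\wt\rho$ then descends to $S=\wt{R}/I_X$ provided it annihilates $I_X$.

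The crux, and the step I expect to be the main obstacle, is verifying these two conditions by a single degree estimate. For degree-one generators $y,y'$ the commutator $[\rho_1(y),\rho_1(y')]$ reduces modulo $J$ to $[\sigma(y),\sigma(y')]=0$ (as $\sigma$ is multiplicative and $S/JS$ is commutative), hence lies in $(J\mathcal{A})_2=\oplus_{i,j}J_{2+a_j-a_i}$; and for a minimal generator $g$ of $I_X$ of degree $\delta\le n_X$ the element $\wt\rho(g)$ reduces to $\sigma(0)=0$ and so lies in $(J\mathcal{A})_\delta=\oplus_{i,j}J_{\delta+a_j-a_i}$. Every internal degree occurring here is at most $\max\{2,n_X\}+s$, where $s=\max_{i,j}(a_i-a_j)$ is the spread of $\cE$; by the definition of $\underline{n}$ in Proposition \ref{prop:general-lift} this equals $\underline{n}$ exactly. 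Since $C$ is $\underline{n}$-separating we have $J_k=0$ for all $k\le\underline{n}$, so every term above vanishes. Thus the $\rho_1(y)$ commute and $\wt\rho(I_X)=0$, and $\wt\rho$ descends to an $R$-linear map $\rho:S\to\mathcal{A}$.

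Finally I would check that $\rho$ solves the problem. It is $R$-linear by construction, and its reduction modulo $J$ is an $R/J$-algebra homomorphism $S/JS\to\mathcal{A}/J\mathcal{A}$ agreeing with $\sigma$ in degree one; as $S/JS$ is generated in degree one the two maps coincide, so $\rho$ reduces to $\sigma$. Sheafifying via Proposition \ref{prop:normal-lift} converts $\rho$ into an $\cO_{\mathbb{P}^m}$-linear morphism $f_*\cO_X\otimes\cE\to\cE$ whose restriction to $C$ is the given structure morphism, which is condition (ii). All of the hypotheses enter only through the degree estimate: I expect the delicate point to be confirming that the calibration $\underline{n}=\max\{2,n_X\}+a_t$ dominates both the relation degrees (up to $n_X$) and the commutator degree ($2$) after the twist by the spread of $\cE$, so that $\underline{n}$-separatedness of the aCM curve $C$ annihilates every obstruction term simultaneously.
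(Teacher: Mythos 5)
Your proof is correct, but it takes a genuinely different route from the paper's. The paper treats condition (ii) as a cohomological extension problem: the obstruction to extending the structure morphism from $C$ to $\mathbb{P}^{m}$ lies in $\rmH^{1}(f_{\ast}\cO_{X}\otimes{\cE}nd(\cE)\otimes\cI_{C|\mathbb{P}^{m}})$, and this group is shown to vanish by chopping the minimal free resolution of $\cI_{C|\mathbb{P}^{m}}$ --- of length $m-1$ by the aCM hypothesis and Auslander--Buchsbaum, with every twist at least $\underline{n}$ by the separating hypothesis --- into short exact sequences, pushing the $\rmH^{1}$ up to an $\rmH^{m-1}$ of negative twists of $f_{\ast}\cO_{X}$, and invoking Kodaira vanishing on $X$. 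You instead build the extension by hand at the level of graded modules: lift the degree-one part of $\sigma$, extend freely over the join presentation $\wt{R}\twoheadrightarrow S$, and let $\underline{n}$-separatedness annihilate both the commutators and the images of the generators of $I_{X}$, all of which sit in graded pieces of $J\mathcal{A}$ of internal degree at most $\max\{2,n_{X}\}+a_{t}=\underline{n}$. Your route avoids Kodaira vanishing entirely (the paper's argument tacitly needs $X$ smooth and characteristic zero at that step, neither of which appears in the statement of the lemma), and it delivers more than condition (ii) asks for --- an extension that is already an $S$-module structure --- so it absorbs most of the subsequent work in the proof of Proposition \ref{prop:general-lift}. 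The paper's route, by contrast, needs only linearity of the restricted map and localizes the aCM hypothesis cleanly in the shape of the resolution of $\cI_{C|\mathbb{P}^{m}}$.

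One step deserves more care. The dictionary converting condition (i) into an $R/J$-algebra homomorphism $\sigma:S/JS\to\mathcal{A}/J\mathcal{A}$ is not literally an application of Proposition \ref{prop:normal-lift}: that proposition assumes the base ring is normal, and the homogeneous coordinate ring $R/J$ of an aCM curve need not be. What you actually need is that $R/J\cong\oplus_{i}\rmH^{0}(\cO_{C}(i))$, and hence $M/JM\cong\oplus_{i}\rmH^{0}(\cE|_{C}(i))$, so that applying $\oplus_{i}\rmH^{0}(-(i))$ to the sheaf-level module structure on $\cE|_{C}$ produces a map landing in $M/JM$ and determines $\sigma$; this follows from the Cohen--Macaulayness of $R/J$, and it is the one place where the aCM hypothesis enters your argument. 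Once that is made explicit, the remaining degree estimates and the descent of $\wt{\rho}$ through $I_{X}$ go through exactly as you describe.
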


\begin{proof}
The obstruction to extending a $\cO_C$-linear map $f_{\ast}\cO_{X_C} \otimes \cE|_{C} \to \cE|_{C}$ to an $\cO_{\mathbb{P}^{m}}$-linear map $f_{\ast}\cO_{\mathbb{P}^{n}} \otimes \cE \to \cE$ lies in $\rmH^{1}(f_{\ast}\cO_{X} \otimes {\cE}nd(\cE) \otimes \cI_{C|\mathbb{P}^{m}}).$  The aCM condition implies that by the Auslander-Buchsbaum formula, $\cI_{C|\mathbb{P}^{m}}$ has a minimal free resolution of the form
\begin{equation}
\label{eqn:ideal-res}
0 \leftarrow \cI_{C|\mathbb{P}^{m}} \leftarrow \bigoplus_{j_{1}} \cO_{\mathbb{P}^{m}}(-b_{1,j_1}) \leftarrow \cdots \leftarrow \bigoplus_{j_{m-1}}\cO_{\mathbb{P}^{m}}(-b_{m-1,j_{m-1}}) \leftarrow 0
\end{equation}
where for all $\ell,s$ we have $b_{\ell,j_s} \geq \underline{n}.$  Since ${\cE}nd(\cE) \cong \oplus_{i,j}\cO_{\mathbb{P}^{m}}(a_{j}-a_{i}),$ if we chop (\ref{eqn:ideal-res}) into short exact sequences, twist all of them by $f_{\ast}\cO_{X} \otimes {\cE}nd(\cE),$ and take cohomology, we obtain an inclusion
\begin{equation}
\rmH^{1}(f_{\ast}\cO_{X} \otimes {\cE}nd(\cE) \otimes \cI_{C|\mathbb{P}^{m}}) \hookrightarrow \bigoplus_{i,j,j_{m-1}}\rmH^{m-1}(f_{\ast}\mathcal{O}_{X} \otimes \cO_{\mathbb{P}^{m}}(a_{j}-a_{i}-b_{m-1,j_{m-1}})) 
\end{equation} 
Since each summand on the right-hand side is isomorphic to $\rmH^{m-1}(\cO_{X}(a_{j}-a_{i}-b_{m-1,j_{m-1}}))$ which is zero by Kodaira vanishing, we conclude that $\rmH^{1}(f_{\ast}\cO_{X} \otimes {\cE}nd(\cE) \otimes \cI_{C|\mathbb{P}^{m}})=0.$
\end{proof}
 
\textit{Proof of Theorem A:}  Since the Ulrich condition depends only on generic linear projections, we may assume without loss of generality that $X$ is embedded in $\mathbb{P}^{n}$ by the complete linear series $|\mathcal{O}_{X}(1)|.$  By Lemma \ref{lma:acm-check} it suffices to check that if $I_{X|\mathbb{P}^{n}}$ is generated in degree at most $N$ we may take $\underline{n}=N$ in the proof of Proposition \ref{prop:general-lift}.  In the case at hand, $Y=\mathbb{P}(V)$ where $V \subseteq H^{0}(\mathcal{O}_{X}(1))$ is a general $(m+1)-$dimensional subspace, and $f:X \rightarrow \mathbb{P}(V)$ is a general linear projection associated to $X \hookrightarrow \mathbb{P}H^{0}(\cO_X(1)).$  Using the notation of the proof of Proposition \ref{prop:general-lift}, all we need to do is verify that the ideal $I_{X} \subseteq \wt{R}$ is generated in degree at most $N.$  We have 
\begin{equation}
\wt{R}={\rm Sym}^{\bt}V \otimes_{\bk} {\rm Sym}^{\bt}H^{0}(\cO_{X}(1)) \cong {\rm Sym}^{\bt}(V \oplus H^{0}(\cO_{X}(1)))
\end{equation}
where $V$ is an $(m+1)-$dimensional subspace of $H^{0}(\cO_{X}(1)).$  It follows from the exact sequence
\begin{equation}
V \otimes_{\bk}  {\rm Sym}^{\bt}(V \oplus H^{0}(\cO_{X}(1))) \rightarrow  {\rm Sym}^{\bt}(V \oplus H^{0}(\cO_{X}(1))) \rightarrow  {\rm Sym}^{\bt}H^{0}(\cO_{X}(1)) \rightarrow 0
\end{equation}
that $I_{X} \subseteq \wt{R}$ is generated by $I_{X|\mathbb{P}^{n}}$ and $V$; in particular, $I_{X}$ is generated in degree at most $N$. \hfill \qedsymbol
        
\section{The Case of Curves} \label{s:curves}

In this final section we prove Propositions B and D, and study some explicit examples of $f-$Ulrich sheaves.

\textit{Proof of Proposition B:}  Let $f:X \rightarrow Y$ be a degree-$d$ finite morphism of smooth projective curves with branching degree equal to $b,$ and let $E$ be a vector bundle of rank $r$ on $X.$  In what follows, we denote the respective genera of $X$ and $Y$ by $g_{X}$ and $g_{Y}.$

((i) $\Rightarrow$ (ii)) If $E$ is $f-$Ulrich, we have by definition an isomorphism $\mathcal{O}_{Y}^{dr} \xrightarrow[]{\cong} f_{\ast}E$; the adjunction of this is $\mathcal{O}_{X}^{dr} \xrightarrow[]{\cong} f^{\ast}f_{\ast}E \rightarrow E,$ which is surjective.  It follows that $E$ is globally generated with $dr$ global sections.  By Riemann-Roch and Riemann-Hurwitz, we have 
\begin{equation}
\label{chern-branch}
c_{1}(E)=\chi(E)-r(1-g_{X})=\chi(f_{\ast}E)-r(1-g_{X})=dr(1-g_{Y})-r(1-g_{X})=br/2.
\end{equation}

((ii) $\Rightarrow$ (i))  If $E$ is a vector bundle of rank $r$ which is globally generated with $dr$ global sections and $c_{1}(E)=br/2$, we have the evaluation sequence
\begin{equation}
0 \rightarrow M \rightarrow \mathcal{O}_{X}^{dr} \rightarrow E \rightarrow 0
\end{equation}
and its direct image
\begin{equation}
0 \rightarrow f_{\ast}M \rightarrow (f_{\ast}\mathcal{O}_{X})^{dr} \rightarrow f_{\ast}E \rightarrow 0
\end{equation}
If the tracial summand $\mathcal{O}_{Y}^{dr} \subset (f_{\ast}\mathcal{O}_{X})^{dr}$ has nonzero intersection with the subbundle $f_{\ast}M,$ then since $h^{0}(f_{\ast}M)=h^{0}(M)=0$ there is a section $s: \mathcal{O}_{Y} \rightarrow \mathcal{O}_{Y}^{dr}$ and an effective divisor $Z$ on $Y$ such that $f_{\ast}M \cap s(\mathcal{O}_{Y}) = \mathcal{O}_{Y}(-Z).$  However, this would imply that the torsion sheaf $\mathcal{O}_Z$ is a subsheaf of the vector bundle $f_{\ast}E,$ which is impossible.  It follows that the map $\mathcal{O}_{Y}^{dr} \subset (f_{\ast}\mathcal{O}_{X})^{dr} \rightarrow f_{\ast}E$ is injective.  The support of its cokernel is a divisor in $|\det{f_{\ast}E}|,$ but a calculation similar to (\ref{chern-branch}) shows that $c_{1}(f_{\ast}E)=0,$ so the map is also surjective.  We may then conclude that $\mathcal{O}_{Y}^{dr} \cong f_{\ast}E,$ i.e. that $E$ is $f-$Ulrich.  \hfill \qedsymbol

\subsection{Examples of $f-$Ulrich Sheaves}

In what follows, $S$ is a smooth surface, $f : S \to C$ is a morphism, and $\cG$ is a vector bundle on $S.$


\begin{dfn}
A coherent sheaf $\cF$ is $(\cG,f)-$Ulrich if $R^{i}f_{\ast}(\cF \otimes {\cG}^{\vee})=0$ for $i=0,1$ and $f_{\ast}\cF$ is a trivial vector bundle on $C.$
\end{dfn}

\begin{lma}
\label{lma:gf-ulrich}
If $\cF$ is a locally free $(\cG,f)-$Ulrich sheaf of rank $r$ on $S$ and $X \subset S$ is a smooth irreducible curve which is the degeneracy locus of a vector bundle morphism $\phi: \cO_{S}^{\oplus {\rm rk}(\cM) \cdot {\rm rk}(\cG)} \to f^{\ast}\cM \otimes \cG$ for some vector bundle $\cM$ on $C,$ then $\cF|_{X} \otimes {\rm coker}({\phi}^{\vee})$ is an $f|_{X}$-Ulrich sheaf.
\end{lma}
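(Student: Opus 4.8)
The plan is to realize $\cF|_X \otimes {\rm coker}(\phi^\vee)$ as $\cF \otimes_{\cO_S} Q$ for a sheaf $Q$ supported on $X$ that sits in an explicit short exact sequence on $S$, and then to reduce the Ulrich property to the two vanishings $R^{0}f_{\ast}(\cF \otimes \cG^\vee) = R^{1}f_{\ast}(\cF \otimes \cG^\vee) = 0$ built into the definition of $(\cG,f)$-Ulrich, via the projection formula.

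First I would set up the determinantal data. Put $\rho := {\rm rk}(\cM)\cdot{\rm rk}(\cG)$, so that $\phi : \cO_S^{\oplus \rho} \to f^{\ast}\cM \otimes \cG$ is a morphism between vector bundles of the same rank $\rho$. Its degeneracy locus $X$ is the vanishing locus of $\det\phi$, and off $X$ the map $\phi$ is an isomorphism; hence $\phi$, and therefore its dual $\phi^\vee : (f^{\ast}\cM \otimes \cG)^\vee \to \cO_S^{\oplus \rho}$, is injective as a map of sheaves (its kernel is torsion-free and vanishes on the dense complement of $X$). Writing $Q := {\rm coker}(\phi^\vee)$, which is supported on $X$, I obtain the short exact sequence
\[
0 \to f^{\ast}\cM^\vee \otimes \cG^\vee \xrightarrow{\phi^\vee} \cO_S^{\oplus \rho} \to Q \to 0.
\]
Since the Fitting ideal of $Q$ is $(\det\phi)$, the sheaf $Q$ is an $\cO_X$-module, and as $X$ is a smooth curve on which $\phi^\vee$ generically has corank one, $Q$ is a rank-one sheaf on $X$; by the projection formula for the closed immersion $X \hookrightarrow S$ one has $\cF \otimes_{\cO_S} Q = \cF|_X \otimes Q$, a sheaf of rank $r$ on $X$.

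Next I would tensor the displayed sequence with the locally free sheaf $\cF$, which preserves exactness, obtaining
\[
0 \to \cF \otimes \cG^\vee \otimes f^{\ast}\cM^\vee \to \cF^{\oplus \rho} \to \cF \otimes Q \to 0,
\]
and then apply $f_{\ast}$. By the projection formula $R^{i}f_{\ast}(\cF \otimes \cG^\vee \otimes f^{\ast}\cM^\vee) \cong R^{i}f_{\ast}(\cF \otimes \cG^\vee)\otimes \cM^\vee$, and both of these vanish for $i = 0,1$ precisely because $\cF$ is $(\cG,f)$-Ulrich. The relevant segment of the long exact sequence of higher direct images therefore reads $0 \to f_{\ast}(\cF^{\oplus \rho}) \to f_{\ast}(\cF \otimes Q) \to 0$, giving
\[
f_{\ast}(\cF \otimes Q) \cong (f_{\ast}\cF)^{\oplus \rho},
\]
which is a trivial bundle on $C$ since $f_{\ast}\cF$ is trivial by hypothesis.

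Finally, because $\cF \otimes Q$ is supported on $X$, its direct image under $f$ coincides with $(f|_X)_{\ast}(\cF|_X \otimes Q)$; as this is a nonzero free $\cO_C$-module, $f|_X : X \to C$ is nonconstant, hence a finite flat morphism of smooth projective curves, and a coherent sheaf on $X$ with trivial pushforward under such a map is by definition $f|_X$-Ulrich (the normalization $(f|_X)_\ast(\cF|_X\otimes Q) \cong \cO_C^{\deg(f|_X)\cdot r}$ is automatic from the rank of a pushforward along a finite flat map). I expect the conceptual core — that the two error terms in the direct-image sequence are exactly the $i=0,1$ direct images annihilated by the $(\cG,f)$-Ulrich vanishing — to be immediate once the sequence is written down. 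The only delicate points are bookkeeping: confirming that the degeneracy scheme structure makes $Q$ an $\cO_X$-module that is generically a line bundle (so that the output has the advertised rank $r$), and checking that $f|_X$ is genuinely finite so that the curve notion of $f|_X$-Ulrich applies. I would flag that the triviality of $f_{\ast}(\cF \otimes Q)$ itself does not use local freeness of $Q$, so the Ulrich conclusion is robust even where $Q$ degenerates.
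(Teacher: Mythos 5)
Your proposal is correct and follows essentially the same route as the paper: both arguments rest on the short exact sequence $0 \to \cF \otimes f^{\ast}\cM^{\vee} \otimes \cG^{\vee} \to \cF^{\oplus \mathrm{rk}(\cM)\cdot\mathrm{rk}(\cG)} \to \cF|_{X} \otimes \mathrm{coker}(\phi^{\vee}) \to 0$, followed by pushing forward along $f$ and killing the two boundary terms with the $(\cG,f)$-Ulrich vanishing (via the projection formula for the $f^{\ast}\cM^{\vee}$ twist). The extra bookkeeping you supply — injectivity of $\phi^{\vee}$, the $\cO_X$-module structure and generic rank of the cokernel, and finiteness of $f|_X$ — is correct and is left implicit in the paper.
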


\begin{proof}
Consider the exact sequence
\begin{equation}
0 \to \cF \otimes f^{\ast}\cM^{\vee} \otimes {\cG}^{\vee} \to \cF^{\oplus {\rm rk}(\cM) \cdot {\rm rk}(\cG)} \to {\cF}|_{X} \otimes {\rm coker}({\phi}^{\vee}) \to 0 
\end{equation}
Since $\cF$ is $(\cG,f)-$Ulrich by assumption, applying $f_{\ast}$ to this exact sequence gives isomorphisms 
\begin{equation}
\cO_{C}^{\oplus {\rm rk}(\cM) \cdot {\rm rk}(\cG) \cdot {\rm rk}(f_{\ast}\cF)} \cong (f_{\ast}\cF)^{\oplus {\rm rk}(\cM) \cdot {\rm rk}(\cG)} \cong (f|_{X})_{\ast}(\cF|_{X} \otimes {\rm coker}(\phi^{\vee}))
\end{equation}
\end{proof}

We now apply this to the case where $S$ is a product of curves. 

\begin{prop}
\label{prop:prod-curves}
Let $C$ and $Z$ be smooth projective curves with respective genera $g$ and $h.$  Assume the following:
\begin{itemize}
\item[(i)]{$Z$ admits a pair $\cF$ and $\cG$ of vector bundles such that $\cG$ is globally generated of positive degree and $H^{i}(\cF \otimes \cG^{\vee})=0$ for $i=0,1.$}
\item[(ii)]{There is a vector bundle $\cM$ on $C$ and a morphism $\phi : \cO_{C \times Z}^{\oplus {\rm rk}(\cM) \cdot {\rm rk}(\cG)} \to \cM \boxtimes \cG$ of vector bundles on $C \times Z$ whose degeneracy locus $X$ is a smooth irreducible curve.}
\end{itemize}   
Then the finite map $f: X \to C$ induced by projection admits an $f-$Ulrich sheaf.  Moreover, for each $r \geq 1,$ there exist bundles $\cF$ and $\cG$ on $Z$ satisfying (i) and a line bundle $\cM$ on $C$ satisfying (ii) for which the resulting $f-$Ulrich bundle is of rank at least $r.$  
\end{prop}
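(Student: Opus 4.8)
The plan is to obtain the existence statement as a direct application of Lemma~\ref{lma:gf-ulrich}, and then to drive the rank of the output bundle above any prescribed $r$ by a careful choice of input data. Write $S=C\times Z$ with projections $p_1=f:S\to C$ and $p_2:S\to Z$, and note the identity $\cM\boxtimes\cG=p_1^\ast\cM\otimes p_2^\ast\cG=f^\ast\cM\otimes p_2^\ast\cG$. For the existence assertion I would feed Lemma~\ref{lma:gf-ulrich} the pullbacks $p_2^\ast\cF$ and $p_2^\ast\cG$ as its $(\cG,f)$-Ulrich data on $S$. Because $f=p_1$ is a product projection, flat base change (equivalently the K\"unneth formula) identifies $R^if_\ast p_2^\ast(-)$ with $H^i(Z,-)\otimes_\bk\cO_C$. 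Hypothesis (i) then gives $R^if_\ast\big(p_2^\ast\cF\otimes p_2^\ast\cG^\vee\big)\cong H^i(Z,\cF\otimes\cG^\vee)\otimes_\bk\cO_C=0$ for $i=0,1$, while $f_\ast p_2^\ast\cF\cong H^0(Z,\cF)\otimes_\bk\cO_C$ is trivial; thus $p_2^\ast\cF$ is locally free and $(p_2^\ast\cG,f)$-Ulrich. By the identity above, hypothesis (ii) says exactly that $X$ is the degeneracy locus of a morphism $\cO_S^{\oplus{\rm rk}(\cM)\cdot{\rm rk}(\cG)}\to f^\ast\cM\otimes p_2^\ast\cG$, so Lemma~\ref{lma:gf-ulrich} applies and $(p_2^\ast\cF)|_X\otimes{\rm coker}(\phi^\vee)$ is $f|_X$-Ulrich.

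For the ``moreover'' I would restrict to $\cM$ and $\cG$ of rank $1$. Then $\phi$ is a single section of the line bundle $L:=\cM\boxtimes\cG$ with $X=Z(\phi)\in|L|$, and $\phi^\vee:L^\vee\to\cO_S$ is multiplication by $\phi$, with image $\cI_X$; hence ${\rm coker}(\phi^\vee)\cong\cO_X$ and the resulting bundle is simply $(p_2^\ast\cF)|_X\cong(p_2|_X)^\ast\cF$, of rank ${\rm rk}(\cF)$. (Equivalently, the isomorphism in the proof of Lemma~\ref{lma:gf-ulrich} gives $(f|_X)_\ast\big((p_2^\ast\cF)|_X\otimes{\rm coker}(\phi^\vee)\big)\cong\cO_C^{\,h^0(Z,\cF)}$; since $\deg(f|_X)=(\cM\boxtimes\cG)\cdot(\{c\}\times Z)=\deg\cG$, the defining property of an $f|_X$-Ulrich sheaf forces its rank to be $h^0(\cF)/\deg\cG$, which equals ${\rm rk}(\cF)$ once $\deg\cG$ is large, by Riemann--Roch.) It therefore suffices to realize (i) and (ii) with ${\rm rk}(\cF)\geq r$.

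Given $r$, I would first choose a bundle $\cF_0$ of rank $r$ on $Z$ with $H^0(\cF_0)=H^1(\cF_0)=0$; such $\cF_0$ exist for every $r$, e.g.\ a direct sum of $r$ general line bundles of degree $h-1$, or a general stable bundle of slope $h-1$ (taking $\cF_0$ stable/indecomposable makes the output genuinely of rank $r$ rather than a sum of line bundles, matching the remark preceding the proposition). Next take $\cG$ a very ample line bundle of large degree on $Z$ and set $\cF:=\cF_0\otimes\cG$, so that $\cF\otimes\cG^\vee=\cF_0$ has no cohomology and $\cG$ is globally generated of positive degree; this is (i), with ${\rm rk}(\cF)=r$. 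Finally take $\cM$ a very ample line bundle of large degree on $C$, so that $L=\cM\boxtimes\cG$ is very ample on $S$; a general $\phi\in H^0(S,L)$ then has smooth zero locus by Bertini, and this zero locus is connected, hence irreducible, because $L$ is very ample. Since $X\cdot(\{c\}\times Z)=\deg\cG>0$ the curve $X$ meets each fiber of $p_1$ finitely, so $f|_X:X\to C$ is finite; this verifies (ii), and the previous paragraph produces an $f|_X$-Ulrich bundle of rank $r$.

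The one point that genuinely requires care is condition (ii): realizing $X$ as a \emph{smooth irreducible} degeneracy locus. With $\cM$ and $\cG$ of rank $1$ this reduces to the standard Bertini theorem for the very ample system $|L|$, which is exactly why the rank-$1$ case already reaches every $r$. Were one instead to insist on $\cG$ of higher rank (to make the examples look less like line-bundle constructions), the obstacle would become a Bertini-type statement for degeneracy loci: one must ensure simultaneously that $X$ is smooth and irreducible and that $\phi$ degenerates with corank exactly one along $X$, so that ${\rm coker}(\phi^\vee)$ remains locally free and the clean rank count persists. This demands that the target $f^\ast\cM\otimes p_2^\ast\cG$ be sufficiently positive, and is the step I expect to be the most delicate; the line-bundle route is preferable precisely because it sidesteps it.
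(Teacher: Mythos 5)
Your proof is correct. The first half (existence) is essentially the paper's own argument: you pull back $\cF$ and $\cG$ along $q=p_2$, verify the $(q^{\ast}\cG,p)$-Ulrich conditions by the K\"unneth/base-change identification $R^{i}p_{\ast}q^{\ast}(-)\cong H^{i}(Z,-)\otimes_{\bk}\cO_{C}$ (the paper invokes Grauert's theorem for the same computation), and then apply Lemma \ref{lma:gf-ulrich}. Where you genuinely diverge is the ``moreover'' clause. The paper keeps $\cG$ of arbitrary rank $s$: it produces the orthogonal semistable pair $(\cF,\cG)$ of ranks $(r,s)$ from \cite{DN}, twists so that $\cG$ is globally generated, and then obtains a smooth irreducible degeneracy locus by showing $\cM\boxtimes\cG$ is ample (via Hartshorne's ampleness criterion for semistable bundles) and appealing to the Bertini-type results of \cite{FL} and \cite{Ott} for degeneracy loci --- exactly the delicate step you flag at the end. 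You instead take $\cG$ to be a line bundle, so the degeneracy locus is a divisor in a very ample linear system and classical Bertini suffices; your rank count $(q^{\ast}\cF)|_{X}\otimes\mathrm{coker}(\phi^{\vee})\cong(q|_{X})^{\ast}\cF$ of rank $r$ is correct, and the stated proposition follows. The trade-off is that your examples come with $f$-Ulrich line bundles for free (apply your own construction with $\cF_{0}$ replaced by one of its rank-one constituents, or by any general line bundle of degree $h-1$), so the rank-$r$ bundle is ``obviously replaceable'' by a line bundle on the same morphism $f$; the paper's higher-rank-$\cG$ construction is engineered precisely to avoid this, which is the feature emphasized in the introduction, though it is not part of the formal statement you were asked to prove.
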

\begin{proof}
Let $p : C \times Z \to C$ and $q: C \times Z \to Z$ be the relevant projection maps; clearly $p|_{X}=f.$  We will show that $q^{\ast}\cF$ is an $f-$Ulrich sheaf on $C \times Z$.  By Lemma \ref{lma:gf-ulrich}, it is enough to show that $q^{\ast}\cF$ is a $(q^{\ast}\cG,p)-$Ulrich sheaf.  Grauert's theorem together with (i) implies that $R^{i}p_{\ast}q^{\ast}(\cF \otimes \cG^{\vee})=0$ for $i=0,1$; in addition, $p_{\ast}q^{\ast}\cF \cong H^{0}(\cF) \otimes \cO_{C}.$  This proves the first part of the statement.

Let $r \geq 1$ be given.  By \cite{DN}, there exist semistable vector bundles $\cF$ and $\cG$ on $Z$ of respective ranks $r$ and $s$ such that $H^{i}(\cF \otimes \cG^{\vee})=0$ for $i=0,1.$  Replacing $\cF$ and $\cG$ by twists if necessary, we can assume that $c_{1}(G) \geq 2s(h+1)$; this guarantees that $\cG$ is globally generated (e.g. \cite{Po}, 2.5).  In particular, the pullback $q^{\ast}\cG$ is globally generated.

Let $\cM$ be a globally generated line bundle on $C$ having positive degree.  We claim that there exists a morphism $\phi : \cO_{C \times Z}^{s} \to \cM \boxtimes \cG$ whose degeneracy locus $X$ is a smooth irreducible curve.  Since $\cM \boxtimes \cG$ is globally generated, Theorem II of \cite{FL} and Teorema 2.8 of \cite{Ott} imply that the claim is justified once $\cM \boxtimes \cG$ is shown to be an ample vector bundle.  For a given point $z \in Z,$ we have $\cM \boxtimes \cG \cong (\cM \boxtimes \cO_{Z}(z)) \otimes q^{\ast}(\cG(-z)).$  The bundle $\cG(-z)$ is semistable of degree $c_{1}(\cG(-z)) \geq s(2h+1) > 0$, so a theorem of Hartshorne (e.g. \cite{Laz} Theorem 6.4.15) implies that it is ample; consequently, $q^{\ast}\cG(-z)$ is semiample.  Since $\cM \boxtimes \cO_{Z}(z)$ is ample, it follows that $\cM \boxtimes \cG$ is ample.
\end{proof}

\begin{rmk}
Given a morphism of curves $f: X \to C$, one could use the existence of a finite morphism $g : X \to \mathbb{P}^{1}$ and attempt to construct an $f-$Ulrich sheaf by applying the previous considerations to the product morphism $(f,g) : X \to C \times \mathbb{P}^{1}.$  However, the singularities of $(f,g)$ may be very hard to control.  
\end{rmk}

\subsection{Ramification}
We recall the following Clifford-type theorem of Mercat for semistable vector bundles.  
\begin{thm}
\label{merc-cliff}
(\cite{Mer}, Theorem 2.1) Let $X$ be a smooth curve of genus $g_{X} \geq 5$ which is not hyperelliptic, trigonal, or a plane quintic, and let $E$ be a semi-stable vector bundle on C of rank $r,$ degree $d$ and slope $\mu=d/r.$
\begin{itemize}
\item[(i)]{If $2+\frac{2}{g_{X}-4} \leq \mu \leq 2g_{X}-4-\frac{2}{g_{X}-4},$ then $h^{0}(E) \leq \frac{d}{2}.$}
\item[(ii)]{If $1 \leq \mu \leq 2+\frac{2}{g_{X}-4},$ then $h^{0}(E) \leq \frac{1}{g_{X}-2}(d-r)+r.$ \hfill \qedsymbol}
\end{itemize}
\end{thm}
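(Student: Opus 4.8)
The plan is to restate both inequalities in terms of the Clifford index $\gamma(E) := \mu(E) - 2h^0(E)/r + 2$ of the semistable bundle $E$, and then to bound $\gamma(E)$ from below by pushing the problem down to the line-bundle Clifford index $\mathrm{Cliff}(X) \geq 2$ of the curve. A direct Riemann--Roch computation shows that conclusion (i) is exactly the assertion $\gamma(E) \geq 2$, whereas conclusion (ii) is the weaker, slope-dependent lower bound $\gamma(E) \geq \big((g_X-4)\mu + 2\big)/(g_X-2)$; the two right-hand sides agree at the transition slope $\mu = 2 + 2/(g_X-4)$, so it suffices to produce the appropriate lower bound for $\gamma(E)$ in each slope range. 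Before doing so I would record the Serre-duality symmetry: $K_X \otimes E^{\vee}$ is again semistable of rank $r$, with slope $2g_X - 2 - \mu$ and $h^0(K_X \otimes E^{\vee}) = h^1(E)$, and one checks that $\gamma(K_X \otimes E^{\vee}) = \gamma(E)$ while the two slope ranges are interchanged under $\mu \mapsto 2g_X - 2 - \mu$. This lets me assume throughout that $\mu \leq g_X - 1$, after which every slope of interest lies in $[1, g_X-1]$.

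The logical skeleton I would use is the contrapositive, reducing the rank-$r$ statement to a rank-$1$ statement about $\mathrm{Cliff}(X)$. Suppose case (i) failed, so that $\gamma(E) < 2$, i.e. $h^0(E) > \deg E / 2$. The aim is to manufacture from $E$ a line bundle $A$ on $X$ with $h^0(A) \geq 2$, $h^1(A) \geq 2$, and $\deg A - 2(h^0(A) - 1) < 2$, which directly contradicts $\mathrm{Cliff}(X) \geq 2$. The engine for this construction is an analysis of sub-line-bundles: among saturated line subbundles of $E$ carrying a nonzero section I would take one, $A$, of maximal degree; semistability of $E$ forces $\deg A \leq \mu$, and the quotient $E/A$ has every Harder--Narasimhan slope $\geq \mu$ since any further quotient of it is also a quotient of $E$. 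Combining $h^0(E) \leq h^0(A) + h^0(E/A)$ with an inductive estimate on the semistable factors of $E/A$ (the base case being classical Clifford together with $\mathrm{Cliff}(X) \geq 2$), the hope is to show that an anomalously large $h^0(E)$ must concentrate on a single line-bundle piece with too small a Clifford defect.

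The main obstacle is precisely that this last forcing is false in its naive form: summing the classical line-bundle bound $h^0 \leq \deg/2$ over $r$ pieces only yields the weak vector-bundle Clifford inequality $h^0(E) \leq \deg E / 2 + r$, because cheap sections coming from low-degree subbundles (at the extreme, copies of $\mathcal{O}_X$ with $h^0 = 1 > \deg/2$) can accumulate. Shaving off the extra $+r$ to reach the sharp constant $1/2$ is exactly where the global semistability of $E$ must be exploited to prevent such accumulation, and this quantitative control of the destabilizing line bundle is the technical heart of the proof. I expect case (ii), the low-slope regime, to be governed by an extremal family of bundles of Lazarsfeld--Mukai type, built from a minimal-degree special line bundle, whose Clifford defect is genuinely smaller than $2$; it is this family that both prevents the construction above from ever producing $\gamma(A) < 2$ and forces the weaker constant $1/(g_X-2)$ in place of $1/2$. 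Verifying that no other configuration beats this extremal one, and that all the estimates remain valid uniformly for every $g_X \geq 5$, is where the real work of the proof lies.
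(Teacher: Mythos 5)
The first thing to note is that the paper does not prove this statement at all: it is quoted verbatim as Theorem 2.1 of Mercat's paper \cite{Mer}, with the \qedsymbol{} marking it as an imported result. So there is no internal argument to compare against, and your attempt must stand on its own as a proof of a research-level theorem. Your preparatory reductions are correct and worth confirming: conclusion (i) is indeed equivalent to $\gamma(E) := \mu - 2h^{0}(E)/r + 2 \geq 2$, conclusion (ii) to $\gamma(E) \geq ((g_{X}-4)\mu + 2)/(g_{X}-2)$, the two bounds agree at $\mu = 2 + \tfrac{2}{g_{X}-4}$, and $\gamma(K_{X} \otimes E^{\vee}) = \gamma(E)$ follows from Riemann--Roch. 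One slip: the two slope ranges are \emph{not} interchanged under $\mu \mapsto 2g_{X}-2-\mu$. The range in (i) is self-dual (symmetric about $g_{X}-1$), while the range in (ii) maps to $[\,2g_{X}-4-\tfrac{2}{g_{X}-4},\, 2g_{X}-3\,]$, which lies outside both hypotheses. This is harmless only because (ii) concerns slopes already below $g_{X}-1$, so duality is never needed there; but as written the claimed symmetry is false.

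The genuine gap is that the theorem itself is never proved: everything after the reformulation is explicitly conjectural in your own text (``the hope is to show,'' ``I expect case (ii),'' ``where the real work of the proof lies''), and the deferred step is precisely the entire content of Mercat's theorem. Your proposed engine --- take a saturated section-carrying line subbundle $A$ of maximal degree, use $\deg A \leq \mu$ from semistability, and induct via $h^{0}(E) \leq h^{0}(A) + h^{0}(E/A)$ --- cannot close the gap as stated, for the reason you yourself identify: each inductive step can lose up to one section of slack, and over $r$ steps this reassembles exactly the weak bound $h^{0}(E) \leq \tfrac{d}{2} + r$. Nothing in your setup forces the per-step loss below $1$, because $\deg A \leq \mu$ bounds $A$ from above but gives no lower bound on its Clifford defect, and the quotient $E/A$ need not be semistable, so the hypothesis you are inducting on is not preserved. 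Mercat's actual argument requires structurally different input --- an analysis of the subsheaf generated by global sections and a simultaneous induction controlling rank, degree, and the contribution of the gonality pencil (equivalently $\mathrm{Cliff}(X) \geq 2$) at every stage --- and none of that machinery, nor the extremal analysis you invoke for case (ii), is supplied. What you have is a correct translation of the statement into Clifford-index language together with an accurate diagnosis of where the difficulty sits, but not a proof of either inequality.
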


A classical result of Noether implies that the gonality of any smooth plane curve of degree $d \geq 4$ is equal to $d-1.$  The following consequence will be used in the sequel.

\begin{lma}
\label{lma:plane-curve-gonality}
Any smooth projective curve with gonality 5 or greater satisfies the hypotheses of Theorem \ref{merc-cliff}. \hfill \qedsymbol
\end{lma}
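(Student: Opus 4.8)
The plan is to unwind the hypotheses of Theorem \ref{merc-cliff} into the four conditions $g_X \geq 5$, $X$ not hyperelliptic, $X$ not trigonal, and $X$ not a smooth plane quintic, and to verify each directly from the assumption $\mathrm{gon}(X) \geq 5$. Three of these are immediate from the definition of gonality. A hyperelliptic curve carries a $g^1_2$, so it has gonality $2$; a trigonal curve carries a $g^1_3$, so it has gonality at most $3$; and by the result of Noether recalled immediately above, a smooth plane quintic has gonality $5-1 = 4$. In every case the gonality is at most $4 < 5$, so a curve with $\mathrm{gon}(X) \geq 5$ can be none of these three types.

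The remaining point is the genus bound $g_X \geq 5$, and here I would invoke Brill--Noether existence. First I would recall the standard consequence that every smooth projective curve of genus $g$ carries a pencil of degree $\lfloor (g+3)/2 \rfloor$: setting $r = 1$ in the Brill--Noether number $\rho = g-(r+1)(g-d+r)$ gives $\rho = 2d-g-2$, which is nonnegative exactly when $d \geq (g+2)/2$, so the Existence Theorem (Kleiman--Laksov, Kempf) produces a $g^1_d$ with $d = \lfloor (g+3)/2 \rfloor$ and hence $\mathrm{gon}(X) \leq \lfloor (g+3)/2 \rfloor$. I would then argue by contraposition: if $g_X \leq 6$ we would have $\mathrm{gon}(X) \leq \lfloor 9/2 \rfloor = 4 < 5$, contradicting the hypothesis, so in fact $g_X \geq 7 \geq 5$.

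The whole argument is short, and its only nontrivial ingredient is the gonality bound $\mathrm{gon}(X) \leq \lfloor (g+3)/2 \rfloor$; the mild obstacle is simply to cite the correct form of Brill--Noether existence. Should one wish to avoid Brill--Noether altogether, one could instead verify by hand, using Riemann--Roch and the classical low-genus classification, that every curve of genus at most $6$ has gonality at most $4$; but the uniform bound is cleaner and handles all small genera at once.
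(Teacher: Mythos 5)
Your proposal is correct, and it fills in exactly the reasoning the paper leaves implicit: the lemma is stated with no proof (the \qedsymbol appears in the statement itself), the plane-quintic case being covered by the Noether gonality result quoted immediately beforehand, and the remaining points (hyperelliptic and trigonal curves have gonality at most $3$, and the Brill--Noether bound $\mathrm{gon}(X) \leq \lfloor (g+3)/2 \rfloor$ forces $g_X \geq 7 \geq 5$) being standard. Your computation is accurate throughout, so there is nothing to correct.
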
  

Proposition D follows immediately the next result and (i) of Proposition \ref{prop:bielliptic-boundary}.

\begin{prop}
\label{bound-final-form}
Let $f:X \rightarrow Y$ be a finite flat morphism of degree $d \geq 3,$ where $X$ and $Y$ of smooth projective curves with respective genera $g_{X}$ and $g_{Y}.$  Assume that $g_{Y} \geq 1$ and that $X$ is neither hyperelliptic, trigonal, nor a plane quintic.  Then if $X$ admits an $f-$Ulrich bundle, we have that
\begin{equation}
\label{corollary-bound-2}
d \leq b/4.
\end{equation} 
\end{prop}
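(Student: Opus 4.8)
The plan is to translate the Ulrich conditions into numerical data via Proposition B and then feed them into Mercat's Clifford-type bound (Theorem \ref{merc-cliff}). So suppose $\cE$ is an $f$-Ulrich bundle of rank $r$ on $X$. Since $f$-Ulrich bundles are Gieseker-semistable with respect to the pullback of a polarization on $Y$, and Gieseker-semistability coincides with slope-semistability on a curve, $\cE$ is slope-semistable. By Proposition B we have $c_{1}(\cE)=br/2$ and $h^{0}(\cE)=dr$, so the slope of $\cE$ is $\mu=b/2$. I would first record that the hypothesis that $X$ is neither hyperelliptic nor trigonal forces $g_{X}\geq 5$ (every smooth curve of genus at most $4$ is hyperelliptic or trigonal), so that the full hypotheses of Theorem \ref{merc-cliff} are in force.

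Next I would pin down the range of $\mu$. By Riemann--Hurwitz, $b=2(g_{X}-1)-2d(g_{Y}-1)$; since $g_{Y}\geq 1$ this gives $\mu=b/2\leq g_{X}-1$, and in particular $b$ is even. On the other hand $\cE$ is globally generated of degree $br/2\geq 0$, and if $b=0$ then $\cE$ would be a globally generated bundle of degree $0$, hence trivial, forcing $h^{0}(\cE)=r\neq dr$; so $b\geq 2$ and $\mu\geq 1$. A short numerical check shows $g_{X}-1\leq 2g_{X}-4-\tfrac{2}{g_{X}-4}$ for $g_{X}\geq 5$, so $\mu$ lies in the interval $[1,\,2g_{X}-4-\tfrac{2}{g_{X}-4}]$ covered by the two cases of Theorem \ref{merc-cliff}.

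Now I would split on the slope. If $\mu\geq 2+\tfrac{2}{g_{X}-4}$, then case (i) of Theorem \ref{merc-cliff} applies and gives $dr=h^{0}(\cE)\leq \tfrac{1}{2}c_{1}(\cE)=br/4$, i.e.\ $d\leq b/4$, which is the desired inequality. If instead $\mu\leq 2+\tfrac{2}{g_{X}-4}$, then case (ii) gives $dr=h^{0}(\cE)\leq \tfrac{1}{g_{X}-2}\bigl(br/2-r\bigr)+r$; dividing by $r$ and using $b/2-1\leq 1+\tfrac{2}{g_{X}-4}=\tfrac{g_{X}-2}{g_{X}-4}$ yields $d\leq \tfrac{1}{g_{X}-4}+1=\tfrac{g_{X}-3}{g_{X}-4}\leq 2$, contradicting $d\geq 3$. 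Hence only the first case can occur, and $d\leq b/4$.

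I expect the main obstacle to be bookkeeping rather than conceptual: one must verify that the slope $\mu=b/2$ always lands inside the interval where Theorem \ref{merc-cliff} produces a nontrivial bound, and correctly dispose of the low-slope regime. This is exactly where the two standing hypotheses are spent. The assumption $g_{Y}\geq 1$ keeps $\mu$ below the upper threshold $2g_{X}-4-\tfrac{2}{g_{X}-4}$ (for $g_{Y}=0$ the slope can overshoot it), while $d\geq 3$ is what converts the low-slope case (ii) into a genuine contradiction rather than a weak bound; the \'etale case $b=0$ must also be excluded by hand to guarantee $\mu\geq 1$.
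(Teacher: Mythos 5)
Your proof is correct and follows essentially the same route as the paper: semistability plus Proposition B give $\mu=b/2$ and $h^{0}(\cE)=dr$, the hypothesis $g_{Y}\geq 1$ rules out the high-slope regime of Mercat's theorem, and $d\geq 3$ turns case (ii) into a numerical contradiction, leaving case (i), which yields $d\leq b/4$. The only difference is that you explicitly dispose of the $b=0$ (slope below $1$) possibility, a point the paper's proof passes over silently.
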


\begin{proof}
Assume $X$ admits an $f-$Ulrich bundle $E$ of rank $r \geq 1.$  Then $E$ is semistable with $h^{0}(E)=dr$ and $\mu=\frac{c_{1}(E)}{r}=\frac{b}{2}.$  To verify that $E$ satisfies the hypotheses of Theorem \ref{merc-cliff}, we now rule out the possibility that $\frac{b}{2} = \mu > 2g_{X}-4-\frac{2}{g_{X}-4}.$  If the latter holds, then since $g_{X} \geq 5$ we have 
\begin{equation}
-d(g_{Y}-1) >  g_{X}-3-\frac{2}{g_{X}-4} \geq 2-\frac{2}{g_{X}-4} \geq 0
\end{equation}
which cannot happen, given that $g_{Y} \geq 1.$  We will be done once we check that (i) of Theorem \ref{merc-cliff} applies, since the desired inequality $d \leq b/4$ will then follow at once.  If (ii) applies, then $h^{0}(E) \leq \frac{1}{g_{X}-2}(c_{1}(E)-r)+r.$  This is equivalent to $d \leq \frac{b-2}{2g_{X}-4}+1,$ which may in turn be rewritten as 
\begin{equation}
d \leq \frac{2}{1+\frac{g_{Y}-1}{g_{X}-2}} 
\end{equation}
The right-hand side is at most 2, so this is clearly impossible.
\end{proof} 

\begin{rmk}
There is more evidence for the principle that a finite flat morphism $f:X \to Y$ must be sufficiently ramified in order to admit an Ulrich sheaf.  If $X$ is reduced, connected and $f:X \to Y$ is unramified then $X$ does not admit an $f$-Ulrich sheaf unless $f$ is an isomorphism.  Indeed, if $\cE$ is an $f$-Ulrich sheaf then $f_*\cE \cong \cO_Y^{\oplus N}$ for some $N$, by definition.  We consider the associated $\cO_Y$-algebra homomorphism $f_*\cO_X \to M_n(\bk) \tk \cO_Y$.  Since $f$ is unramified $f_*\cO_X \cong (f_*\cO_X)^\vee$ as $\cO_Y$-modules.  Hence $\Hom(f_*\cO_X, \cO_Y) = \rmH^0(X,\cO_X)=\bk$ and we conclude that $f_*\cO_X \to M_n(\bk) \tk \cO_Y$ factors through the trace $f_* \cO_X \to \cO_Y$.  The only way this can happen is if the trace $f_*\cO_X \to \cO_Y$ is an isomorphism and hence $f$ is an isomorphism.  
\end{rmk}

We now turn to the sharpness of the bound in Proposition \ref{bound-final-form}.

\begin{prop}
\label{prop:bielliptic-boundary}
For each $d \geq 3,$ there exists a finite morphism $f_{d} : X_{d} \rightarrow Y_{d}$ of smooth projective curves satisfying the following properties:

\begin{itemize}
\item[(i)]{$f_d$ has degree $d,$ and its branch divisor has degree $4d.$}
\item[(ii)]{$X_{d}$ has a unique bielliptic structure $h_{d}:X_{d} \rightarrow E$ (where $E$ is an elliptic curve) and is neither hyperelliptic, trigonal, nor a smooth plane quintic.}
\item[(iii)]{$f_d$-Ulrich line bundles exist and are parametrized by $h_{d}^{\ast}({\rm Pic}^{d}(E)) \subset {\rm Pic}^{2d}(X_{d}).$}
\end{itemize}
\end{prop}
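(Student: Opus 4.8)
The goal is to construct, for each $d \geq 3$, an explicit boundary example showing that $b \geq 4d$ is sharp, and the natural source for such examples is the bielliptic case, where ramification of the base covering and the double-cover structure conspire to give $f$-Ulrich line bundles. My plan is to build $X_d$ as a double cover $h_d : X_d \to E$ of a fixed elliptic curve $E$, and to produce $Y_d$ together with $f_d : X_d \to Y_d$ of degree $d$ by descending an appropriate map through the bielliptic involution, so that $Y_d$ is itself hyperelliptic. The first step is to choose the data: fix $E$ and a line bundle of degree $2d$ on $E$ whose induced map realizes $E$ as a $d$-sheeted cover of $\mathbb{P}^1$, then take $X_d$ to be a suitable double cover of $E$ branched along $4$ points (to control the genus via Riemann--Hurwitz) and let $f_d$ be the composite or descent giving degree exactly $d$ onto a genus-$\geq 1$ base $Y_d$. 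I would compute genera and the branch degree via Riemann--Hurwitz to verify part (i), arranging the branch locus of $f_d$ to have degree precisely $4d$; this is a bookkeeping calculation with the ramification divisor of $h_d$ and of the base map interacting, but it should come out exactly to the boundary value since that is what forced the inequality in Proposition~\ref{bound-final-form}.

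For part (ii), I would pin down the numerical invariants so that the Clifford-index hypotheses of Lemma~\ref{lma:plane-curve-gonality} hold: I need $g_{X_d} \geq 5$ and gonality at least $5$, equivalently that $X_d$ is not hyperelliptic, trigonal, or a plane quintic. Since $X_d$ is bielliptic, a bielliptic curve of genus $\geq 4$ is automatically not trigonal and (for genus $\geq 4$) not hyperelliptic, and one rules out the plane quintic by a genus count; the key point is to choose the branching data for $h_d$ so that $g_{X_d}$ is large enough. Uniqueness of the bielliptic structure is a standard fact for bielliptic curves of sufficiently high genus (the bielliptic involution is central in the automorphism group, uniqueness following from a count of the dimension of the fixed locus / Castelnuovo--Severi inequality), so I would cite or reprove that via the Castelnuovo--Severi inequality to force $g_{X_d}$ into the range where the double cover $h_d$ is unique.

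The heart of the matter is part (iii), the explicit description of the $f_d$-Ulrich line bundles. By Proposition~B, a line bundle $\cL$ on $X_d$ is $f_d$-Ulrich if and only if it is globally generated, has $h^0(\cL) = d$, and $\deg \cL = b/2 = 2d$. The plan is to show that the bundles of the form $\cL = h_d^{\ast} \cA$ for $\cA \in \mathrm{Pic}^d(E)$ are exactly these. The degree is immediate since $h_d$ has degree $2$, giving $\deg \cL = 2d$, which matches $\mathrm{Pic}^{2d}(X_d)$ as claimed. To verify $h^0(h_d^\ast \cA) = d$ I would push forward along $h_d$, using $h_{d\ast}\cO_{X_d} \cong \cO_E \oplus \eta^{-1}$ where $\eta$ is the line bundle defining the double cover, so that $h^0(h_d^\ast \cA) = h^0(\cA) + h^0(\cA \otimes \eta^{-1})$; a careful choice of $\eta$ (degree $2$, i.e. $4$ branch points, so $\cA \otimes \eta^{-1}$ has degree $d - 2$ and generic vanishing forces $h^0 = 0$ while $h^0(\cA) = d$ on the elliptic curve) pins the count to $d$. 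Global generation of $h_d^\ast \cA$ follows from global generation of $\cA$ on $E$. The genuinely delicate step, and the main obstacle, is the converse: showing that \emph{every} $f_d$-Ulrich line bundle arises as such a pullback. Here I would argue that the $f_d$-Ulrich condition, combined with semistability and the compatibility of $f_d$ with the descent to $Y_d$, forces $\cL$ to be invariant (up to the relevant twist) under the bielliptic involution, hence to descend to $E$; uniqueness of the bielliptic structure from part (ii) is exactly what makes this rigidity argument go through, and matching degrees then identifies the descended bundle with an element of $\mathrm{Pic}^d(E)$. The technical work is in checking that no ``sporadic'' non-invariant line bundles can satisfy all of the numerical and global-generation constraints simultaneously, which I expect to require a Clifford-type estimate (via Theorem~\ref{merc-cliff} or its rank-$1$ classical analogue) to exclude the remaining candidates.
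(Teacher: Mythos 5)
Your proposal has two genuine gaps, one numerical and one structural. The numerical one is fatal to part (iii) as you set it up: you take $h_d : X_d \to E$ to be branched at $4$ points, so that $\eta$ has degree $2$ and $\mathcal{A}\otimes\eta^{-1}$ has degree $d-2 \geq 1$. On an elliptic curve a line bundle of positive degree \emph{always} has $h^0 = \deg$; there is no ``generic vanishing'' to invoke. So your section count gives $h^0(h_d^\ast\mathcal{A}) = d + (d-2) = 2d-2 \neq d$, and the Ulrich criterion of Proposition B fails. Worse, a double cover of an elliptic curve branched at $4$ points has genus $3$ by Riemann--Hurwitz, so $X_d$ is either hyperelliptic or trigonal (a plane quartic), violating (ii), falling outside the range where Castelnuovo--Severi gives uniqueness of the bielliptic structure, and falling outside the hypotheses of Mercat's theorem. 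What is actually needed is $\deg\eta > d$ so that $\mathcal{A}\otimes\eta^{-1}$ has negative degree; the paper achieves this by taking $X_d \in |\mathcal{H}\boxtimes\mathcal{M}|$ on $Y_d \times E$ with $Y_d$ hyperelliptic of genus $d-1$, $\mathcal{H}$ the hyperelliptic pencil and $\deg\mathcal{M}=d$, so that $g_{X_d}=d^2+1$, the projection to $Y_d$ has degree $d$ with branch degree exactly $4d$, the projection to $E$ has degree $2$ with $\deg(B/2)=d^2 \gg d$, and the section count comes out to $h^0(\mathcal{M}')+0=d$. Your construction never actually produces $Y_d$ or $f_d$ (``descending an appropriate map through the bielliptic involution'' is not a construction), so (i) cannot be verified either.

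The structural gap is the converse half of (iii). You correctly identify it as the delicate step, but the descent-by-involution-invariance argument you sketch is not carried out, and it is not clear how the Ulrich condition would force $\sigma$-invariance of $\mathcal{L}$ directly. The paper's route is different and concrete: an $f_d$-Ulrich line bundle is globally generated of degree $2d$ with exactly $d$ sections, hence defines a nondegenerate morphism $\phi_{\mathcal{L}} : X_d \to \mathbb{P}^{d-1}$; Castelnuovo's genus bound rules out $\phi_{\mathcal{L}}$ being birational onto its image (since $d^2+1$ exceeds the Castelnuovo bound for degree $2d$ in $\mathbb{P}^{d-1}$), so $\phi_{\mathcal{L}}$ factors through a cover of a nondegenerate curve, and a degree/nondegeneracy analysis forces that curve to be elliptic of degree $d$ with the cover of degree $2$, which must be $h_d$ by the uniqueness in (ii). You would need to either carry out your invariance argument in full or adopt something like this projective-geometry argument; as written, the converse inclusion $\{f_d\text{-Ulrich line bundles}\} \subseteq h_d^\ast(\mathrm{Pic}^d(E))$ is unproved.
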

\begin{proof}
Let $d \geq 3$ be given, let $Y_d$ be a hyperelliptic curve of genus $d-1,$ and let $E$ be an elliptic curve.  If $|\mathcal{H}|$ is the hyperelliptic pencil on $Y_d$ and $\mathcal{M}$ is a fixed line bundle of degree $d$ on $E,$ the line bundle $\mathcal{H} \boxtimes \mathcal{M}$ on the surface $Y_d \times E$ is ample and globally generated with $2d$ global sections, so there is a smooth irreducible curve $X_d$ in the linear system $|\mathcal{H} \boxtimes \mathcal{M}|,$ which we fix for the rest of the proof.  

Let $f_d : X_d \rightarrow Y_d$ be the finite morphism of degree $d$ which is induced by projection onto the first factor.  The genus of $X_d$ is equal to $d^{2}+1$ by the adjunction formula, so Riemann-Hurwitz implies that the branch divisor of $f_d$ has degree $4d$; thus (i) is proved.

Given that $X_d$ is bielliptic by construction and of genus at least 10, the Castelnuovo-Severi inequality implies that the bielliptic structure $h_d : X_d \rightarrow E$ is unique and that $X_d$ is neither hyperelliptic, trigonal nor a plane quintic; thus (ii) is proved. 

We claim that for any $\mathcal{M}' \in {\rm Pic}^{d}(E),$ the line bundle $\mathcal{L}:=h_{d}^{\ast}\mathcal{M}'$ is an Ulrich line bundle for $f_d$.  Since $d \geq 3$ it follows that $\mathcal{M}'$ is globally generated, so the same is true of $\mathcal{L};$ we also have that $c_{1}(\mathcal{L})=2d.$  By Lemma \ref{ulrich-curves-criterion}, it is enough to check that $h^{0}(\mathcal{L})=d.$  If $B/2$ is a square root of the branch divisor of $h_{d},$ then $B/2$ has degree $d^{2},$ so
\begin{equation}
\rmH^{0}(\mathcal{L}) \cong \rmH^{0}(\mathcal{M}' \otimes h_{\ast}\mathcal{O}_{X}) \cong \rmH^{0}(\mathcal{M'}) \oplus \rmH^{0}(\mathcal{M}'(-B/2)) \cong \rmH^{0}(\mathcal{M}')
\end{equation}
and it follows that $h^{0}(\mathcal{L})=h^{0}(\mathcal{M}')=d.$  In order to prove (iii) it remains to verify that any $f_{d}-$Ulrich line bundle is of the form $h_{d}^{\ast}\mathcal{M}'$ for some $\mathcal{M}' \in {\rm Pic}^{d}(E).$

Let $\mathcal{L}$ be an $f_{d}-$Ulrich line bundle.  Then $\mathcal{L}$ is globally generated of degree $2d$ with $d$ global sections, and so determines a morphism $\phi_{\mathcal{L}}:X_{d} \rightarrow \mathbb{P}^{d-1}$ whose image is not contained in any hyperplane.  The proof will be concluded once we show that $\phi_{\mathcal{L}}$ must factor through $h_{d} : X \rightarrow E.$

We consider two separate cases.

\textbf{Case 1:} $d=3.$  $X_3$ has genus 10, and any degree-6 morphism from $X_3$ to $\mathbb{P}^{2}$ which is birational onto its image must be an embedding of $X_3$ as a plane sextic curve.  However, $X_3$ does not admit a degree-4 pencil, which contradicts the biellipticity of $X_{3}.$  The only possibility is that $\phi_{\mathcal{L}}$ factors through a double covering of a smooth plane cubic; the latter map must be $h_{3}$ by the uniqueness of the bielliptic structure on $X_{3}.$

\textbf{Case 2:} $d \geq 4.$  Castelnuovo's bound yields different inequalities depending on whether $d=4,$ $d=5,$ or $d \geq 6.$  However, a straightforward comparison of these implies that if $\phi_{\mathcal{L}}$ is birational onto its image for $d \geq 4$ we have $d^{2}+1 \leq 3d+3,$ which is impossible.  Therefore $\phi_{\mathcal{L}}$ must factor through a covering of a nondegenerate curve $X'_{d} \subseteq \mathbb{P}^{d-1}$ of degree at least 2 and at most $d.$  The nondegeneracy implies that $X'_{d}$ must be of degree $d-1$ or $d,$ but since $d-1$ cannot divide $2d,$ we conclude that the degree of $X'_{d}$ must be $d,$ so that the covering $X_{d} \rightarrow X'_{d}$ is of degree 2.  Another application of the Castelnuovo bound implies that $X'_{d}$ must be rational or elliptic, and since $X_d$ is not hyperelliptic, $X'_{d}$ must be elliptic.  Consequently $X'_{d}=E$ and this double covering must be $h_{d}$ by the same reasoning as the previous case.
 \end{proof}

\begin{rmk}
\label{rmk:high-degree}
If $f:X \to \mathbb{P}^{m}$ is a finite flat morphism (where $m \geq 2$) then Riemann-Hurwitz applied to a line in $\mathbb{P}^{m}$ shows that the branch divisor of $f$ is of degree at least $2(d-1)$; any aCM curve in $\mathbb{P}^{m}$ satisfying the hypotheses of Proposition \ref{prop:general-lift} is of degree 3 or greater, and will therefore intersect the branch divisor of $f$ in degree at least $6(d-1)$.
\end{rmk}

Proposition \ref{bound-final-form} can be leveraged to obtain a lower bound for the ramification of a finite flat morphism $f:X \rightarrow Y$ of varieties \textit{of any dimension} $m \geq 1$ which admits an $f-$Ulrich sheaf.   
\begin{cor}
\label{cor-bound-final-form}
Let $f:X \rightarrow Y$ be a finite flat morphism of degree $d \geq 3$ with branch divisor $B \subseteq Y,$ where $X$ and $Y$ are projective varieties of dimension $m \geq 2$ which are nonsingular in codimension 1.  Then if $X$ admits an $f-$Ulrich bundle, we have that
\begin{equation}
\label{corollary-bound-2}
4d \leq \inf_{Z}\{B \cdot Z\}
\end{equation} 
where the infimum is taken over all curves $Z \subseteq Y$ satisfying the following properties:
\begin{itemize}
\item[(i)]{$Z$ is smooth and irreducible of genus 1 or greater.}
\item[(ii)]{$Z$ meets $B$ transversally in the smooth locus of $Y$.}
\item[(iii)]{The curve $X_{Z}:=f^{-1}(Z)$ is neither hyperelliptic, trigonal, nor a smooth plane quintic.  \hfill \qedsymbol }
\end{itemize}
Moreover, there exist curves $Z \subseteq Y$ satisfying (i), (ii) and (iii), and \textnormal{(\ref{corollary-bound-2})} is sharp for all $d$.  
\end{cor}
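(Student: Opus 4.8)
The plan is to reduce the statement to the one-dimensional bound of Proposition \ref{bound-final-form} by restricting $f$ to curves $Z \subseteq Y$ of the prescribed type. Recall from the introduction that pullback along an embedding carries an $f$-Ulrich sheaf to an $f'$-Ulrich sheaf on the fiber product. Concretely, if $E$ is an $f$-Ulrich bundle on $X$ and $Z \subseteq Y$ is any curve, then flat base change along $Z \hookrightarrow Y$ gives $(f_Z)_*(E|_{X_Z}) \cong (f_* E)|_Z \cong \cO_Z^{\oplus dr}$, so $E|_{X_Z}$ is an $f_Z$-Ulrich bundle of rank $r$ on $X_Z := f^{-1}(Z)$, where $f_Z : X_Z \to Z$ is the induced finite morphism of degree $d$. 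Thus every admissible $Z$ produces a curve $X_Z$ carrying an $f_Z$-Ulrich bundle.

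Fix such a $Z$. Conditions (ii) and (iii) ensure that $X_Z$ is a smooth curve: away from $B$ the cover $f_Z$ is étale over the smooth locus of $Y$, and at the transversal intersection points of $Z$ with $B$ (which lie in the smooth locus of $Y$) transversality forces $X_Z$ to be smooth. Condition (i) gives $g_Z \geq 1$ and condition (iii) rules out the excluded gonality types, so Proposition \ref{bound-final-form} applies to $f_Z$ and yields $4d \leq b_{f_Z}$, where $b_{f_Z}$ is the degree of the ramification divisor of $f_Z$. I would then identify $b_{f_Z}$ with $B \cdot Z$ as follows. Writing $R \subseteq X$ for the ramification divisor of $f$, transversality ensures that the ramification divisor of $f_Z$ is exactly $R|_{X_Z}$, whence $b_{f_Z} = R \cdot [X_Z] = R \cdot f^*[Z]$ with the intersection computed on $X$; the projection formula together with $f_* R = B$ then gives $R \cdot f^*[Z] = (f_* R) \cdot Z = B \cdot Z$. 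Therefore $4d \leq B \cdot Z$ for every admissible $Z$, and passing to the infimum proves the inequality \textnormal{(\ref{corollary-bound-2})}.

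It remains to exhibit admissible curves and to establish sharpness. For existence I would let $Z$ be a general complete intersection curve of large degree for the embedding of $Y$ attached to $\cO_Y(1)$: Bertini makes $Z$ smooth, irreducible, of genus at least $1$, contained in the smooth locus of $Y$, and transversal to $B$, while a second application of Bertini to the base-point-free system $|f^* \cO_Y(1)|$ on $X$ makes $X_Z$ smooth; for $Z$ of sufficiently high degree the gonality of the complete intersection curve $X_Z$ exceeds $4$, so (iii) holds by Lemma \ref{lma:plane-curve-gonality}. For sharpness, given $d \geq 3$ let $f_d : X_d \to Y_d$ be the bielliptic boundary cover of Proposition \ref{prop:bielliptic-boundary} and choose any smooth projective $(m-1)$-fold $W$. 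Setting $X = X_d \times W$, $Y = Y_d \times W$ and $f = f_d \times \id_W$ yields a finite flat degree-$d$ morphism of smooth varieties with branch divisor $B = B_d \times W$; if $\cL$ is an $f_d$-Ulrich line bundle on $X_d$, then $f_*(\cL \boxtimes \cO_W) \cong (f_{d*}\cL) \boxtimes \cO_W \cong \cO_Y^{\oplus d}$, so $\cL \boxtimes \cO_W$ is $f$-Ulrich. Taking $Z = Y_d \times \{w\}$ gives an admissible curve with $X_Z \cong X_d$ (hence satisfying (iii)) and $B \cdot Z = \deg B_d = 4d$, which meets the lower bound.

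The main obstacle is the identification $b_{f_Z} = B \cdot Z$, that is, the claim that restricting to a transversal $Z$ introduces no spurious ramification, so that the ramification divisor of $f_Z$ is precisely $R|_{X_Z}$; this rests on the compatibility of the relative dualizing sheaf with base change to $Z$, which transversality provides. A secondary point is verifying condition (iii) for the general complete intersection $Z$, for which one needs a gonality lower bound on $X_Z$ strong enough to apply Lemma \ref{lma:plane-curve-gonality}; I would derive this from the large degree of $X_Z$ as a complete intersection curve.
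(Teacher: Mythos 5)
Your restriction argument and your sharpness construction are essentially the paper's. The paper dismisses the first as ``a straightforward restriction argument,'' and your version of it is the right one: by flat base change $E|_{X_Z}$ is $f_Z$-Ulrich, and the projection formula $B\cdot Z = f_*R\cdot Z = R\cdot f^*Z = \deg(R|_{X_Z})$ (legitimate because transversality to $B$ inside the smooth locus guarantees that $R|_{X_Z}$ \emph{is} the ramification divisor of $f_Z$, so Proposition \ref{bound-final-form} applies with $b=B\cdot Z$). For sharpness the paper takes exactly your product $f_d\times\mathrm{id}$ on $X_d\times\mathbb{P}^{m-1}$ with $Z=Y_d\times\{w\}$.

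The one genuine gap is in your existence argument for admissible curves, specifically condition (iii). You assert that ``for $Z$ of sufficiently high degree the gonality of the complete intersection curve $X_Z$ exceeds $4$,'' but the complete intersection here is $Z$, not $X_Z$: the curve you must control is $X_Z=f^{-1}(Z)$, a degree-$d$ cover of $Z$ sitting inside $X$, and the standard gonality lower bounds for complete intersections (in projective space, or for very positive linear systems) do not apply to it directly. You flag this yourself as a ``secondary point,'' but it is precisely the step that needs an idea: you must either prove a gonality bound for curves cut out by pullback divisors on $X$, or transfer a gonality bound from a low-dimensional model up through the finite covers $X_Z\to Z\to(\text{model})$. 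The paper takes the second route and makes it concrete: it chooses a generic linear projection $\pi:Y\to\mathbb{P}^m$, takes $Z=\pi^{-1}(Z')$ for a smooth plane sextic $Z'$ transversal to $\pi(B)$, and then uses Noether's theorem (a smooth plane sextic has gonality $5$ and genus $10$) together with the fact that gonality and genus can only increase under finite covers; this gives $\mathrm{gon}(X_Z)\geq 5$, so Lemma \ref{lma:plane-curve-gonality} yields (iii) (and genus $\geq 1$ for (i)) at once. If you want to keep your complete-intersection curves, you would still need the same ``gonality does not drop under finite covers'' input to pass from $Z$ to $X_Z$, at which point the plane-sextic model is the more economical choice.
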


\begin{proof}
If a curve $Z$ satisfying (i), (ii) and (iii) exists, then a straightforward restriction argument applied to Proposition \ref{bound-final-form} yields (\ref{corollary-bound-2}); we now verify the existence of such curves.  Choose an embedding of $Y$ in projective space of dimension greater than $m,$ and let $\pi : Y \rightarrow \mathbb{P}^{m}$ be a generic linear projection associated to this embedding.  In addition, choose a 2-dimensional projective space $\Lambda \subseteq \mathbb{P}^{m}$ and a smooth plane sextic curve $Z' \subseteq \Lambda$ which meets $\pi(B)$ transversally, and let $Z:={\pi}^{-1}(Z').$  Our transversality assumption implies that $Z$ is smooth and irreducible.  Moreover, since $Z'$ is of genus 10 and has gonality equal to 5, and both gonality and genus can only increase upon passing to finite covers, it follows from Lemma \ref{lma:plane-curve-gonality} that $Z$ satisfies (i),(ii), and (iii).    

To see that (\ref{corollary-bound-2}) is sharp, we take for each $d \geq 3$ the curve morphism $f_{d}: X_d \rightarrow Y_d$ granted by Proposition \ref{prop:bielliptic-boundary} and consider $f_d \times {\rm id} : X_d \times \mathbb{P}^{m-1} \rightarrow Y_d \times \mathbb{P}^{m-1};$ for a general $w \in \mathbb{P}^{m-1},$ the curve $Z = Y_{d} \times \{w\}$ satisfies (i),(ii) and (iii).   
\end{proof}


\begin{thebibliography}{10}

\begin{singlespace}
\bibitem[AFO]{AFO}
M. Aprodu, G. Farkas, and A. Ortega, {\sl Minimal resolutions, Chow forms of K3 surfaces and Ulrich bundles}, preprint.
\end{singlespace}

\begin{singlespace}
\bibitem[BHS]{BHS}
J. Backelin, J. Herzog, and H. Sanders, {\sl Matrix factorizations of homogeneous polynomials,} in \textit{Proceedings of the 5th National School in Algebra held in Varna, Bulgaria, Sept. 24-Oct. 4, 1986}, Springer Lecture Notes in Mathematics 1352 (1988), p. 1-33
\end{singlespace}

\begin{singlespace}
\bibitem[BHU]{BHU}
J. Brennan, J. Herzog, and B. Ulrich, {\sl Maximally generated Cohen-Macaulay modules,} Math. Scand. \textbf{61} (1987), p. 181-203.
\end{singlespace}

\begin{singlespace}
\bibitem[CH1]{CH1}
M. Casanellas and R. Hartshorne, {\sl ACM Bundles on Cubic Surfaces}, J. Eur. Math Soc. \textbf{13}, p. 709-731.
\end{singlespace}

\begin{singlespace}
\bibitem[CH2]{CH2}
M. Casanellas and R. Hartshorne, {\sl Stable Ulrich Bundles}, with an appendix by F. Gei$\beta$ and F.-O. Schreyer, Internat. J. Math. \textbf{23} (2012), no. 8 
\end{singlespace}

\begin{singlespace}
\bibitem[CKM1]{CKM1}
E. Coskun, R.S. Kulkarni, and Y. Mustopa, {\sl Pfaffian Quartic Surfaces and Representations of Clifford Algebras}, Doc. Math. \textbf{17} (2012), p. 1003-1028
\end{singlespace}

\begin{singlespace}
\bibitem[CKM2]{CKM2}
E. Coskun, R.S. Kulkarni, and Y. Mustopa, {\sl The Geometry of Ulrich Bundles on del Pezzo Surfaces}, J. Algebra \textbf{375} (2013), p. 280-301
\end{singlespace}

\begin{singlespace}
\bibitem[CKM3]{CKM3}
E. Coskun, R. Kulkarni and Y. Mustopa, {\sl On Representations of Clifford Algebras of Ternary Cubic Forms}, in {\sl New Trends in Noncommutative Algebra,} Contemp. Math. \textbf{562} (2011), p. 91-100
\end{singlespace}

\begin{singlespace}
\bibitem[CM]{CM}
L. Costa and R.M. Mir\'{o}-Roig, {\sl $GL(V)-$invariant Ulrich bundles on Grassmannians of lines}, preprint.
\end{singlespace}

\begin{singlespace}
\bibitem[CMP]{CMP}
L. Costa, R.M. Mir\'{o}-Roig, and J. Pons-Llopis, {\sl The representation type of Segre varieties}, Adv. Math. \textbf{230} (2012), p. 1995-2013.
\end{singlespace}

\begin{singlespace}
\bibitem[DN]{DN}
J.-M. Drezet and M.S. Narasimhan, {\sl Groupe de Picard des vari\'{e}t\'{e}s des modules de fibr\'{e}s semi-stables sur les courbes alg\'{e}briques}, Invent. Math. \textbf{97} (1989), p. 53-94.
\end{singlespace}

\begin{singlespace}
\bibitem[EE]{EE}
D. Eisenbud and D. Erman, {\sl Categorified duality in Boij--S\"{o}derberg Theory and invariants of free complexes}, preprint.
\end{singlespace}

\begin{singlespace}
\bibitem[ES]{ES}
D. Eisenbud and F.-O. Schreyer, {\sl Betti Numbers of Syzygies and Cohomology of Coherent Sheaves}, in {\sl Proceedings of the International Congress of Mathematicians, Hyderabad} (2010)
\end{singlespace}

\begin{singlespace}
\bibitem[ESW]{ESW}
D. Eisenbud, F.-O. Schreyer, and J. Weyman, {\sl Resultants and Chow forms via exterior syzygies}, J. Amer. Math. Soc. \textbf{16} (2003), no. 3, 537-579
\end{singlespace}

\begin{singlespace}
\bibitem[FL]{FL}
W. Fulton and R. Lazarsfeld, {\sl On the connectedness of degeneracy loci and special divisors}, Acta Math. \textbf{146} (1981), p. 271-283. 
\end{singlespace}

\begin{singlespace}
\bibitem[H]{H}
D. Hanes, {\sl Special Conditions on Maximal Cohen-Macaulay Modules, and Applications to the Theory of Multiplicities}, University of Michigan Ph. D. thesis (1999).
\end{singlespace}


\begin{singlespace}
\bibitem[Laz]{Laz}
R. Lazarsfeld, {\sl Positivity in Algebraic Geometry II}, Springer-Verlag, New York (2004).
\end{singlespace}

\begin{singlespace}
\bibitem[Mer]{Mer}
V. Mercat, {\sl Clifford's Theorem and Higher Rank Vector Bundles}, Int. J. Math. \textbf{13} (2002), no. 7, p. 785-796
\end{singlespace}

\begin{singlespace}
\bibitem[Ott]{Ott}
G. Ottaviani, {\sl Variet\`{a} proiettive di codimensione piccola}, Quaderni INDAM, Aracne, Roma (1995).
\end{singlespace}

\begin{singlespace}
\bibitem[Po]{Po}
M. Popa, {\sl  Generalized theta linear series on moduli spaces of vector bundles on curves}, in the Handbook of Moduli, Volume III, G. Farkas and I. Morrison eds., Advanced Lectures in Math., International Press, 2013. 
\end{singlespace}

\end{thebibliography}
\end{document}